\numberwithin{equation}{section}
\theoremstyle{plain}
\newtheorem{Thm}{Theorem}[section]
\newtheorem{Cor}[Thm]{Corollary}
\newtheorem{Lem}[Thm]{Lemma}
\newtheorem{Prop}[Thm]{Proposition}
\theoremstyle{definition}
\newtheorem{Def}[Thm]{Definition}
\newtheorem{Xmp}[Thm]{Example}
\newtheorem{Rem}[Thm]{Remark}
\newtheorem{Alg}[Thm]{Algorithm}
\newtheorem{Pro}[Thm]{Problem}
\newcommand{\Prob}{{\bf P}}
\newcommand{\C}{{\mathbb C}}
\newcommand{\R}{{\mathbb R}}
\newcommand{\N}{{\mathbb N}}
\renewcommand{\S}{{\mathbb S}}
\renewcommand{\S}{{\Sigma}}
\newcommand{\card}{\mbox{\rm card~}}
\renewcommand{\det}{\mbox{\rm det\,}}
\renewcommand{\dim}{\mbox{\rm dim\,}}
\newcommand{\diag}{\mbox{\rm diag~}}
\newcommand{\Id}{\mbox{\rm Id~}}
\newcommand{\im}{\mbox{\rm Im~}}
\newcommand{\rad}{\mbox{\rm rad~}}
\newcommand{\rk}{\mbox{\rm rk~}}
\newcommand{\spann}{\mbox{\rm span}}
\newcommand{\cH}{{\mathcal H}}
\newcommand{\cM}{{\mathcal M}}
\newcommand{\cN}{{\mathcal N}}
\newcommand{\cP}{{\mathcal P}}
\newcommand{\cS}{{\mathcal S}}
\newcommand{\cX}{{\mathcal X}}
\begin{document}

\title{Generic Identification of Binary-Valued Hidden Markov Processes}

% Please mark \corrauth after the name of the corresponding author.
\author[A. Sch\"onhuth]{Alexander Sch\"onhuth\affil{1}\comma\corrauth}

\address{\affilnum{1}\ Centrum Wiskunde \& Informatica, Amsterdam,
  Netherlands}
%\affilnum{2}\ Department, Faculty, University, City, Country}

\email{{\tt as@cwi.nl}}

\begin{abstract}
  The generic identification problem is to decide whether a stochastic
  process $(X_t)$ is a hidden Markov process and if yes to infer its
  parameters for all but a subset of parametrizations that form a
  lower-dimensional subvariety in parameter space. Partial answers so
  far available depend on extra assumptions on the processes, which
  are usually centered around stationarity. Here we present a general
  solution for binary-valued hidden Markov processes. Our approach is
  rooted in algebraic statistics hence it is geometric in nature. We
  find that the algebraic varieties associated with the probability
  distributions of binary-valued hidden Markov processes are zero sets
  of determinantal equations which draws a connection to well-studied
  objects from algebra. As a consequence, our solution allows for
  algorithmic implementation based on elementary (linear) algebraic
  routines.
\end{abstract}

\keywords{Algebraic Statistics, Hidden Markov Processes, Generic Identification}
\ams{62M05, 62M99, 14Q99, 68W30}

\maketitle

\section{Introduction}
\label{sec.intro}

Hidden Markov processes (HMPs) have gained widespread interest in
statistics, predominantly due to their striking successes in
applications. Central theoretical concerns have revolved around the
fundamental problems of {\em identifiability}\/ and {\em complete
  identification}. Here and in the following, stochastic processes
$(X_t)$ take values in a finite set ({\em alphabet}\/) $\S$ where {\em
  binary-valued} refers to the case $|\S|=2$.

\begin{Pro}[Complete Identification]
  \label{pro.complete}
  Decide whether a stochastic process $(X_t)$ is a hidden Markov
  process. If it is, infer its parameters.
\end{Pro}

The problem was already raised in the late 50s. A representative list
of references is
\cite{Blackwell57,Gilbert59,Dharma63a,Dharma63b,Dharma65,Fox68,Petrie69,Erickson70}.
See also the more recent contributions
\cite{Baras92,Anderson99,Vidyasagar09,Finesso10} and the
(near-exhaustive) list of references in \cite{Ephraim02}. See also
\cite{Baras92,Baum66} for HMM parameter estimation from data and
\cite{Cappe} for a textbook on related practical issues. In terms of
practical arguments one can argue that it is reasonable to solve
Problem~\ref{pro.complete} for all but a null set of parametrizations
which also explains that the most recent contributions
\cite{Vidyasagar09,Finesso10} provide {\em generic} solutions. That
is, solutions apply for all, but a subset of parametrizations which
form a lower-dimensional subvariety in parameter space.\par

The above-mentioned treatments all raise extra assumptions on the
processes, usually centered around stationarity. The only exception is
Heller who provided a polyhedral cone-based characterization of
arbitrary, also non-stationary HMPs~\cite{Heller65}. This, however,
was exposed as a reformulation rather than a
solution~\cite{Anderson99} in the sense that it does not give rise to
an algorithmic solution of Problem \ref{pro.complete}. To date,
Problem \ref{pro.complete} has still not yet been fully resolved.\par
The fact that one can assign every probability distribution
$\Prob:\S^n\to[0,1]$ over finite-length strings to a HMP on $|\S|^n$
states (which is a well-known exercise, the hidden states of the HMP
form a de Bruijn graph over $\S^n$, together with the obvious
transition probabilities), introduces further complications when
aiming at algorithmic solutions.  We therefore turn our attention to
the following finite reformulation of Problem \ref{pro.complete}.

\begin{Pro}[Finite Identification]
\label{pro.finite}
Let $\Prob:\S^n\to[0,1]$ be a probability distribution over strings of
finite length $n$. Decide whether $\Prob$ is due to a HMP on $d$
hidden states. If it is, infer its parameters.
\end{Pro}

In the course of this paper, we provide a generic solution of Problem
\ref{pro.finite} for binary-valued alphabets in the case of
$d\le\frac{n+1}{2}$. Our solution is rooted in algebraic statistics
where we draw in particular from the concept of an algebraic
statistical model, as described in \cite{Pachter,Drton07}. See for
example \cite{Garcia05,Sullivant08} for discussions on Bayesian
networks, which, as latent variable models, are related to hidden
Markov models.  Since HMPs are uniquely determined by their
distributions over strings of length $2d-1$ \cite{Paz}, a solution of
Problem \ref{pro.finite} also gives rise to a solution of the original
Problem \ref{pro.complete}:

\begin{enumerate}
\item For each $n\in\N$ determine $d(n)$ as the minimal number of
  hidden states such that the answer in the 'Decision' part of
  Problem~\ref{pro.finite} is 'Yes'. In case that there is no
  $d\le\frac{n+1}{2}$ set $d(n):=\infty$.
\item If $d(n),n\in\N$ converges, output 'Yes' and infer corresponding
  parameters. If not, output 'No'.
\end{enumerate}

Since one can extend any probability distribution $\Prob:\S^n\to[0,1]$
that stems from a HMP, to a full, non-HMP stochastic process $(X_t)$
taking values in $\S$ (that is a density $f:\S^{\N}\to\R$), an
infinite-runtime solution of the kind from above is all one can
expect.\par
We denote the set of parameters of HMPs with $d$ hidden states by
$\cH_{d,+}$. By (\ref{eq.dimhdplus}) below, $\cH_{d,+}$ is a
full-dimensional subset of the positive orthant of real affine space
$\R^{d^2+d-1}$. In the form of a theorem, our solution to Problem
\ref{pro.finite} reads as follows.

\begin{Thm}
  \label{t.identification}
  Let $|\S|=2$, $d\le\frac{n+1}{2}$ and $\Prob:\S^n\to[0,1]$ be a
  probability distribution.  There is a an algebraic variety
  $\cN_d\subset\R^{d^2+d-1}$ such that $\dim\cN_d<d^2+d-1$ and an
  algorithmic routine $A$ which, when given $\Prob$ as input, outputs
  \begin{equation*}
    A(\Prob) = 
    \begin{cases}
      \text{'HMP on d hidden states'} & \Prob\in\mathbf{f}_{n,d}((\cH_{d,+}\setminus\cN_d)\\
      \text{'Cannot decide'} & \Prob\in\mathbf{f}_{n,d}(\cN_d)\\
      \text{'No HMP on d hidden states'} & \text{otherwise}
    \end{cases}.
  \end{equation*}
  In the first case, $A$ also outputs the parametrization, which is
  unique up to permutation of hidden states.
\end{Thm}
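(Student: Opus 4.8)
The plan is to realize the parametrization map $\mathbf{f}_{n,d}$ through the observable-operator (linear) representation of an HMP and then to read off both the decision and the reconstruction from rank conditions on a single Hankel-type matrix of probabilities. Writing $\S=\{0,1\}$, to a parameter tuple in $\cH_{d,+}$ I associate two $d\times d$ matrices $M_0,M_1$ (obtained from the transition matrix $T$ by scaling its columns with the state-wise emission probabilities, so that $M_0+M_1=T$), an initial row vector $\pi$, and the all-ones column $\1$, in terms of which
\begin{equation*}
  \Prob(x_1\cdots x_n)=\pi\,M_{x_1}M_{x_2}\cdots M_{x_n}\,\1 .
\end{equation*}
This exhibits $\mathbf{f}_{n,d}$ as a polynomial map whose image is a constructible set, and it recovers the count $d^2+d-1$ of (\ref{eq.dimhdplus}) for the source dimension.

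Next I would introduce the prediction (Hankel) matrix $\cP$ whose rows are indexed by prefixes $u$ and columns by suffixes $v$, with $\cP_{u,v}=\Prob(uv)$ and $|u|+|v|\le n$. The factorization $\Prob(uv)=(\pi M_u)(M_v\1)$ through the $d$-dimensional state space shows at once that $\rk\cP\le d$, so that $\mathbf{f}_{n,d}(\cH_{d,+})$ is contained in the determinantal variety cut out by the vanishing of all $(d+1)\times(d+1)$ minors of $\cP$; this is the source of the determinantal equations advertised in the abstract. The hypothesis $d\le\frac{n+1}{2}$, equivalently $2d-1\le n$, is exactly what guarantees enough prefixes and suffixes (of lengths up to $d-1$ and $d$) for $\rk\cP=d$ to hold generically, in accordance with the classical fact \cite{Paz} that an HMP on $d$ states is determined by its length-$(2d-1)$ statistics.

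The algorithm $A$ then proceeds by computing $\cP$ from the input $\Prob$ and testing its rank. When $\rk\cP=d$ I fix a nonsingular $d\times d$ submatrix and form, by inserting one symbol $s$, the companion matrices $\cP_0,\cP_1$; the products $\widehat M_s=\cP^{+}\cP_s$ recover $M_0,M_1$ up to a common conjugation, after which simultaneous reduction of a generic combination $\widehat M_0+\lambda\widehat M_1$ to normal form reads off $T$, the emission probabilities and $\pi$. Checking that the reconstructed tuple has nonnegative, correctly normalized entries separates the output 'HMP on $d$ hidden states' from 'No HMP on $d$ hidden states'; uniqueness up to permutation follows because the eigenbasis of the generic combination is determined up to reordering, while the stochasticity constraints pin down the residual scaling.

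Finally I would define $\cN_d\subset\R^{d^2+d-1}$ as the union of the loci where this reconstruction degenerates: where the generic combination $M_0+\lambda M_1$ has a repeated eigenvalue, where the pair $M_0,M_1$ fails the joint reachability/observability needed for $\rk\cP=d$, and where the particular minors used in the recovery vanish. Each is a proper algebraic subvariety, so $\dim\cN_d<d^2+d-1$ once I exhibit a single parameter point avoiding all of them, and on $\mathbf{f}_{n,d}(\cN_d)$ the algorithm honestly returns 'Cannot decide'. I expect the main obstacle to lie not in the linear algebra of reconstruction but in proving that, outside $\mathbf{f}_{n,d}(\cN_d)$, the necessary rank condition is also \emph{sufficient}: that every $\cP$ of rank $d$ arising from a genuine distribution already lies in the image and that its fiber is exactly one permutation orbit. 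This requires controlling how the determinantal variety meets the boundary of the positive parameter region and ruling out spurious rank-$d$ coincidences produced by non-HMP distributions.
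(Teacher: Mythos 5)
Your architecture is the same as the paper's: observable-operator representation, Hankel (prediction) matrix, rank test, recovery of the operators from a nonsingular $d\times d$ block and its one-symbol shifts, diagonalization to pass from a finitary to a stochastic HMM parametrization, and a generic bad locus $\cN_d$. The problem is that the step you explicitly defer at the end --- that outside the bad locus the rank condition is also \emph{sufficient} for membership in the image, with a fiber consisting of exactly one permutation orbit --- is not a residual verification but the mathematical core of the result. In the paper this is Lemma \ref{l.generators}: for $n\ge 2d-1$, membership in $\im\mathbf{f}_{\cM_d,n}\setminus\im\mathbf{f}_{\cM_{d-1},n}$ is equivalent to the three conditions $\rk\cP_{p,d-1,d-1}=\rk\cP_{p,\lfloor n/2\rfloor,\lceil n/2\rceil}=\rk\cP_{p,\lceil n/2\rceil,\lfloor n/2\rfloor}=d$, and the hard direction is proved by two inductive sublemmata (\ref{sl.generators.1}, \ref{sl.generators.2}) showing that the recovered operators $T_a=V^{-1}W_a$ reproduce every probability $p(u)$ with $|u|\le n$ and satisfy $(\sum_a T_a)y=y$, so that Theorem \ref{t.oom} can be invoked. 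Related to this, note that only probabilities of strings of length at most $n$ are available, so the rank test must be posed on the two rectangular blocks $\cP_{p,\lfloor n/2\rfloor,\lceil n/2\rceil}$ and $\cP_{p,\lceil n/2\rceil,\lfloor n/2\rfloor}$ together with a lower bound on the small block $\cP_{p,d-1,d-1}$; your single condition ``$\rk\cP=d$'' on the staircase array $\{\Prob(uv):|u|+|v|\le n\}$ is not even well defined, since that array has missing entries.

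The second gap is uniqueness. Your remark that ``the eigenbasis of the generic combination is determined up to reordering'' only shows that \emph{your} reconstruction procedure is canonical; it does not exclude a second stochastic parametrization, not similar to the recovered one, that produces the same distribution --- exactly the ``spurious coincidences'' you flag but do not rule out. The paper needs two nontrivial inputs here: Lemma \ref{l.imagedim} (any two rank-$d$ parametrizations with the same length-$n$ distribution are conjugate by an invertible $S$ with $S\mathbf{1}=\mathbf{1}$, a generalization of results of Ito et al.\ to complex parameters) and Lemma \ref{l.genericfiber} (outside $\cN_d$ the fiber of $\mathbf{f}_{\cH_d,n}$ has cardinality exactly $d!$, adapted from Baras--Finesso by observing their stationarity hypothesis is unused). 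Without these, the trichotomy in the theorem --- in particular that the output 'No HMP on $d$ hidden states' never occurs on $\mathbf{f}_{n,d}(\cH_{d,+}\setminus\cN_d)$, and that the returned parametrization is unique up to permutation --- remains unproven. A smaller but genuine slip: the object you propose to diagonalize, $\widehat M_0+\lambda\widehat M_1$, equals $(O_0+\lambda O_1)M$ in the true coordinates and is not diagonalizable in a $\lambda$-independent basis; the correct object is $\widehat M_0(\widehat M_0+\widehat M_1)^{-1}$, which is conjugate to the diagonal matrix $O_0$ of emission probabilities, and this is what the paper's {\sc InferHMMParam} uses.
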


In the course of proving Theorem~\ref{t.identification}, we provide an
ideal-theoretic characterization of the varieties associated with
finitary processes with arbitrary output alphabets. Based on dimension
arguments, we point out that the varieties of finitary and hidden
Markov processes coincide for binary alphabets. Relationships between
finitary processes and HMPs have been noted already in seminal work on
identification of HMPs
(e.g.~\cite{Blackwell57,Gilbert59,Dharma63a,Dharma63b,Heller65}).
Here we review them from the vantage point of algebraic statistics.
We summarize the corresponding results into the ideal-theoretic
Theorem \ref{t.generators}, which, in turn, is based on the
set-theoretic Lemma \ref{l.generators}. Note that the ideals we
encounter are determinantal in nature; corresponding relationships for
latent variable models have also been noted in
\cite{Bray05,Cueto09,Zwiernik09}.

\paragraph{Short Summary of Contributions.}
\begin{itemize}
\item We provide an algebraic statistical treatment of Problem
  \ref{pro.complete}, based on treating the algebraically more
  convenient Problem \ref{pro.finite}. Thereby, we lay the foundations
  for a treatment that does not require extra assumptions on the
  processes, such as stationarity.
\item We present a generic solution of Problem \ref{pro.finite}, and
  hence of Problem \ref{pro.complete}. The solution is
  \emph{algorithmic in nature} (unlike the only earlier solution from
  \cite{Heller65}).
\item We provide an ideal-theoretic characterization of the varieties
  associated with the probability distributions of finitary processes
  and binary-valued HMPs.
\end{itemize}
All of this is novel, to the best of our knowledge.

\paragraph{Organization of Chapters.}
In section \ref{sec.prelim}, we give the basic definition of an
algebraic statistical model and also the definition of an algebraic
process model, which serves the general purpose to treat stochastic
processes in algebraic statistical settings. In section
\ref{sec.processes}, we give formal definitions of finitary and hidden
Markov processes. In section \ref{sec.models}, we give the definitions
of their algebraic statistical counterparts, the finitary and the
hidden Markov process model. Along with these definitions, we provide
a brief list of fundamental relationships. In section~\ref{sec.dim} we
compute the dimensions of the algebraic varieties associated with
finitary and hidden Markov process models. The crucial observation is
that the varieties of both models coincide for binary-valued output
alphabets.  In section~\ref{sec.hankel} we provide a
Hankel-matrix-based characterization of finitary models hence also of
binary-valued hidden Markov models. The ideal-theoretic formulation of
this is Theorem \ref{t.generators}.  In section~\ref{sec.algorithm} we
present the algorithm on which Theorem~\ref{t.identification} from
above is based.

\paragraph{Major Notation.}
We denote by $\S^*:=\cup_{t\ge 0}\S^t$ the set of all strings over the
alphabet $\S$ where $\S^0=\{\epsilon\}$ with $\epsilon$ the empty
string. We write $a,b\in\S$ for single letters, $v,w\in\S^*$ for
strings and $vw, va$ etc.~for concatenation of letters and
strings. Throughout this paper, if $v=a_1...a_n\in\S^n$
\begin{equation}
  \label{eq.procstring}
  p_X(v) := \Prob(\{X_1=a_1,...,X_n=a_n\})
\end{equation}
refers to the probability that the stochastic process $(X_t)$
generates the string $v\in\S^n$ (for technical convenience we let
stochastic processes start at $t=1$). We simply write $p=p_X$ if this
cannot lead to confusion. We write $'$ for matrix transposition
throughout. None of our algebraic arguments exceed an elementary
level, see \cite{Cox} for an appropriate textbook.

\section{Algebraic Statistical Models}
\label{sec.prelim}

\begin{Def}
  \label{d.algmodel}
  Following \cite{Pachter}, an \emph{algebraic statistical model} with
  $m$ parameters for strings of length $n$ over an alphabet $\S$ is a
  map
  \begin{equation*}
    \begin{array}{rccc}
      \mathbf{f}: &
      \C^m & \longrightarrow &
      \C^{|\S|^n}\\ &\mathbf{z}=(z_1,...,z_m)
      &\mapsto&\mathbf{f}(\mathbf{z}) = (f_v(z_1,...,z_m))_{v\in\S^n}
    \end{array}
  \end{equation*}
  where $f_v\in\C[Z_1,...,Z_m], v\in\S^n$ are polynomials in the
  indeterminates $Z_1,...,Z_m$ and there is a parameter set
  $\cS\subset\C^m$ (usually $\cS\subset\R^m$) such that for
  $\mathbf{z}\in\cS$
  \begin{equation}
    \begin{array}{rccc}
      p_{\mathbf{z}}: & \S^n & \longrightarrow & [0,1]\\
      & v    & \mapsto         & f_v(\mathbf{z})
    \end{array}
  \end{equation}
  is a probability distribution and such that $\C^m$ is the natural
  extension of the parameter set $\cS$ to a complex affine space. 
\end{Def}

We recall that varieties $V\subset\C^n$ correspond to radical ideals
$I\subset\C[X_1,...,X_n]$ insofar as $V$ is the set of zeros of all
polynomials in $I$ \cite{Cox}.  We also recall that an ideal $I$ is
prime iff $xy\in I$ implies $x\in I$ or $y\in I$ and that, in terms of
the above-mentioned correspondence, prime ideals have irreducible
varieties as counterparts.  $\mathbf{f}(\C^m)$, as the image of a
complex-valued polynomial map is a Boolean combination of varieties
(e.g.~\cite[Th.~3.14]{Pachter}). In particular, its topological
closure $V_{\mathbf{f}}=\overline{\mathbf{f}(\C^m)}$ is an irreducible
algebraic variety in $\C^{|\S|^n}$ which corresponds to the prime
ideal $I_{\mathbf{f}}\subset\C[p_v\mid v\in\S^n]$. We write $p_v$
for indeterminates to stress that they are associated with probability
distributions over strings $v\in\S^n$.  We will write $\Prob$ or
$(p(v))_{v\in\S^n}$ for the points in complex affine space
$\C^{\S^n}$.  Polynomials $g\in I_{\mathbf{f}}$ are referred to as
{\em (model) invariants} and the goal of an algebraic statistical
treatment is usually to characterize or even explicitly list these
invariants. See \cite{Pachter,Drton,Drton07} for related textbooks

\subsection{Algebraic Stochastic Process Models}
\label{ssec.processmodels}

When dealing with stochastic processes $(X_t)$, the auxiliary, helpful
observation is that
\begin{equation}
\label{eq.process1}
p_{X}(a_1...a_m)
= \sum_{b_1...b_{n-m}\in\S^{n-m}}
p_{X}(a_1...a_mb_1...b_{n-m}).
\end{equation}
As a consequence, one can make use of (virtual) indeterminates $p_u$
for strings $u$ of length $m$ shorter than $n$ when dealing
with $\C[p_v,v\in\S^n]$:
\begin{equation}
\label{eq.process}
p_u = \sum_{w\in\S^{n-m}}p_{uw}
\end{equation}
reveals $p_u$ as polynomials in the $p_v,v\in\S^n$. This means in
particular that there is no elimination necessary, which is crucial
for this work.\par We emphasize these facts with a definition.

\begin{Def}[Algebraic Stochastic Process Model]
  \label{d.processmodel}
  A family of algebraic statistical models
  \begin{equation}
    (\mathbf{f}_n: \C^d\longrightarrow\C^{\S^n})_{n\in\N}
  \end{equation}
  is called an {\em algebraic (stochastic) process model} if for all
  $1\le m\le n$ and $u\in\S^m$:
  \begin{equation}
    \label{eq.processmodel}
    \mathbf{f}_m(z)_u = \sum_{w\in\S^{n-m}}\mathbf{f}_n(z)_{uw}.
  \end{equation}
\end{Def}

\subsection{Note on Stationarity}
\label{ssec.stationary}
A process $(X_t)$ which takes
values in $\S$ is stationary iff for all $v\in\S^*$
\begin{equation}
  \label{eq.stationary}
  %p_{X}(v) = 
  \sum_{a\in\S}
  p_{X}(va)
  = \sum_{a\in\S}
  p_{X}(av)
\end{equation}
which implies (the more common) $p(v)=\sum_{w\in\S^n}p(wv)$ for all
$n,v$, see \eqref{eq.process1}.  Let $\cX$ be a class of parameterized
processes associated with the process model
$(\mathbf{f}_{\cX,n})_{n\in\N}$. Let
\begin{equation}
  V_{\mathbf{f_{\cX,n}}}=V(I_{\mathbf{f}_{\cX,n}})
\end{equation}
be the variety associated with the string length $n$ probability
distributions.  Let $\langle f_j,j\in J\rangle$ be, as usual, the
ideal generated by polynomials $f_j,j\in J$ and $+$ denote addition of
ideals. The stationary distributions in $V_{\mathbf{f_{\cX,n}}}$ then
give rise to the subvariety
\begin{equation}
  \label{eq.statnull}
  V(I_{\mathbf{f}_{\cX,n}}+\langle \sum_{a\in\S}p_{va}-\sum_{a\in\S}p_{av}, v\in\S^{n-1}\rangle).
\end{equation}
This, unless the processes $\cX$ are stationary by definition
establishes that stationary processes form a lower-dimensional
subvariety in $V_{\mathbf{f}_{\cX,n}}$.\\

The extent to which earlier work depends on stationarity often remains
unclear: for \cite{Vidyasagar09}, for example, this is difficult to
determine, whereas \cite{Finesso10} base their approach on
Kullback-Leibler divergence computations, which is definitely only
possible in case of stationarity. As above-mentioned, \cite{Heller65}
is the only contribution that clearly does not depend on
stationarity.\par
Stationarity has geometric implications: by \eqref{eq.statnull},
stationary HMPs only form a null set among all HMPs. Stationarity also
has technical advantages. For example, it introduces certain
symmetries among row and column conditions in the Hankel matrices,
which is discussed in section~\ref{sec.hankel}, see
Remark~\ref{rem.stationarity}.\par In practical applications, it is
very often essential to assume that processes are not stationary.
This becomes evident in particular in application domains where HMPs
or their close derivatives have established ``gold standards'', for
example speech recognition~\cite{Rabiner89}, protein classification
(through profile HMMs) \cite{Durbin}, gene finding \cite{Burge87} and
gene expression time-course analysis
\cite{Hafemeister11}. Therefore a general treatment of HMP
identification is certainly desirable. \par

\section{Processes}
\label{sec.processes}

\subsection{Finitary Processes}
\label{ssec.mmm}

Finitary processes emerged in the above-mentioned early work on HMP
identification
\cite{Blackwell57,Gilbert59,Dharma63a,Dharma63b,Dharma65,Heller65} and
have remained a core concept also in recent work on
identifiability~\cite{Vidyasagar09,Finesso10}.  Finitary processes
were later also referred to as {\em linearly dependent} \cite{Ito92},
{\em observable operator models} \cite{Jaeger00} or as {\em
  finite-dimensional} \cite{Faigle07,Schoenhuth07}. In their possibly
most prevalent application they served to determine equivalence of
hidden Markov processes (HMPs) in 1992 \cite{Ito92} whose exponential
runtime algorithm was later improved to polynomial
runtime~\cite{Schoenhuth11}.

\begin{Def}[Finitary Process]
\label{d.finitary}
A stochastic process $(X_t)$ is said to be \emph{finitary} iff there
are matrices $T_a\in\R^{d\times d}$ for all $a\in\S$ with
$(\sum_{a\in\S}T_a)\mathbf{1}=\mathbf{1}$ (\/that is
$(\sum_{a\in\S}T_a)$ has unit row sums) and a vector $\pi\in\R^d$ whose
entries sum up to one ($\pi'\mathbf{1}=1$) such that
\begin{equation}
\label{eq.finitary}
\Prob (\{X_1=a_1,...,X_n=a_n\}) = \pi'T_{a_1}\cdot\ldots\cdot T_{a_n}\mathbf{1}
\end{equation}
where $\mathbf{1}=(1,...,1)'\in\R^d$ is the vector of all ones. The
parametrization $((T_a)_{a\in\S},x)$ is referred to as
\emph{$d$-dimensional} in case of $\pi\in\R^d$ and $T_a\in\R^{d\times
  d}$ for all $a\in\S$.
\end{Def}

It is an immediate observation that a finitary process which admits a
$d$-dimensional parametrization also admits a parametrization of
dimension $d+1$. 

\begin{Def}[Rank of a Finitary Process]
\label{d.finitaryrank}
The \emph{rank} of a finitary process $(X_t)$ is the minimal dimension
of a parametrization that it admits.
\end{Def}

We conclude by providing a condition that is necessary for rank $d$
finitary processes. For further reference, we use the notation
\begin{equation}
  \label{eq.tva1an}
  T_v := T_{a_1}T_{a_2}\ldots T_{a_{n-1}}T_{a_n}\in\R^{d\times d} 
\end{equation}
for any $v=a_1\ldots a_n\in \Sigma^n$.  

\begin{Prop}
\label{p.finitarynecessary}
Let $(X_t)$ be a finitary process of rank $d$ and let $n\in\N$ be
an arbitrary integer. Then it holds that
\begin{equation}
\rk [p_X(v_iw_j)]_{1\le i,j\le n} \le d
\end{equation}
for all choices of strings $v_1,...,v_n,w_1,...,w_n\in\S^*$.
\end{Prop}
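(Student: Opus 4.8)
The plan is to exploit the multiplicative structure of the finitary parametrization to factor the matrix $[p_X(v_iw_j)]_{1\le i,j\le n}$ through the fixed $d$-dimensional parameter space. Since $(X_t)$ has rank $d$, it admits a $d$-dimensional parametrization $((T_a)_{a\in\S},\pi)$ with $T_a\in\R^{d\times d}$ and $\pi\in\R^d$, so that $p_X(v)=\pi'T_v\mathbf{1}$ for every $v\in\S^*$; here I adopt the convention $T_\epsilon=\mathrm{Id}$ for the empty string, which is consistent with $p_X(\epsilon)=\pi'\mathbf{1}=1$.

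First I would record the key multiplicativity $T_{vw}=T_vT_w$, which is immediate from the definition $T_v=T_{a_1}\cdots T_{a_n}$ together with the fact that concatenation of strings corresponds to juxtaposition of the index sequences. Applying this to each entry of the matrix in question yields
\[
p_X(v_iw_j)=\pi'T_{v_i}T_{w_j}\mathbf{1}=(\pi'T_{v_i})(T_{w_j}\mathbf{1}).
\]
Next I would introduce the row vectors $\alpha_i:=\pi'T_{v_i}\in\R^{1\times d}$ and the column vectors $\beta_j:=T_{w_j}\mathbf{1}\in\R^{d\times 1}$, so that $p_X(v_iw_j)=\alpha_i\beta_j$. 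Collecting the $\alpha_i$ as the rows of a matrix $A\in\R^{n\times d}$ and the $\beta_j$ as the columns of a matrix $B\in\R^{d\times n}$, this reads as the matrix identity
\[
[p_X(v_iw_j)]_{1\le i,j\le n}=AB.
\]
Since both factors pass through $\R^d$, we have $\rk A\le d$ and $\rk B\le d$, so that the product $AB$ has rank at most $d$, which is the claim.

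There is essentially no obstacle here: the entire content is the observation that the finitary parametrization forces every such Hankel block to factor through the fixed $d$-dimensional space. The only point requiring minor care is the treatment of empty strings among the $v_i,w_j$, which is handled cleanly by the convention $T_\epsilon=\mathrm{Id}$. Note that the argument makes no use of the probabilistic normalization conditions on $(T_a)_{a\in\S}$ and $\pi$, so it applies verbatim to any matrix-product representation of the stated form.
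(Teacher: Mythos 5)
Your proof is correct and follows essentially the same route as the paper: both exploit the factorization $p_X(v_iw_j)=(\pi'T_{v_i})(T_{w_j}\mathbf{1})$ so that the matrix factors through $\R^d$ as a product of an $n\times d$ and a $d\times n$ matrix. You merely spell out explicitly (the matrices $A$, $B$, the convention $T_\epsilon=\mathrm{Id}$, and multiplicativity $T_{vw}=T_vT_w$) what the paper leaves as ``obvious.''
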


\begin{proof}
Let $((T_a)_{a\in\S},\pi)$ be a $d$-dimensional parametrization of
$(X_t)$. We observe that
\begin{equation}
p_X(v_iw_j) = \langle\pi'T_{v_i},T_{w_j}\mathbf{1}\rangle.
\end{equation}
Since $\pi'T_{v_i}\in\R^{1\times d},T_{w_j}\mathbf{1}\in\R^{d\times 1}$
the claim becomes obvious.\qed
\end{proof}

\subsection{Hidden Markov Processes}
\label{ssec.hmm}

\begin{Def}[Hidden Markov process]
\label{d.hmp}
A \emph{hidden Markov process (HMP)} $(X_t)$ on $d$ hidden states
which takes values in $\Sigma$ is parametrized by a tuple $\Theta=(M,
E, \pi)$ where
\begin{enumerate}
\item $M=[m_{s\bar{s}}]\in\R^{d\times d}$ is a non-negative \emph{transition
  probability matrix} with unit row sums $\sum_{\bar{s}=1}^nm_{s\bar{s}} =1$
  (\/\emph{i.e.} the row vectors of $M$ are probability distributions
  over the hidden states)
\item $E=[e_{sa}]\in\R^{d\times \S}$ is a non-negative {\em emission
  probability matrix} with unit row sums $\sum_{a\in\S}e_{sa}=1$,
  (\emph{i.e.} the row vectors of $E$ are probability distributions over
  $\S$)
\item $\pi$ is an \emph{initial probability distribution} over the
  hidden states
\end{enumerate}
We write 
\begin{equation}
\cH_{d,+}:=\{(M,E,\pi)\mid 
\sum_{\bar{s}}m_{s\bar{s}} = \sum_{a\in\S}e_{sa}= \sum_s\pi_s =1\} 
\subset\R^{d^2+d(|\S|)+d}_+
\end{equation}
for the set of HMP parametrizations. We refer to $\cH_{d,+}$ as the
{\em stochastic} parametrizations.
\end{Def}

\begin{Rem}
If more convenient and not leading to clashes with other indices, we
write $i,j$, instead of $s,\bar{s}$, for hidden states.
\end{Rem}

The naming {\em stochastic} parametrizations is to distinguish them
from more relaxed, complex-valued parameter sets whose definition will
follow. Note that
\begin{equation}
  \label{eq.dimhdplus}
  \dim\cH_{d,+} = d(d-1) + d(|\S|-1) + (d-1) = d^2 + d(|\S|-1) - 1
\end{equation}
which means that $\cH_{d,+}$ can be considered a full-dimensional
subset of $\R^{d^2+d(|\S|-1)-1}$. A HMP $(X_t)$ on $d$ hidden states
as parametrized by $(M,E,\pi)$ proceeds by initially moving to a state
$s\in\{1,...,d\}$ with probability $\pi_s$ and emitting the symbol
$X_1=a$ with probability $e_{sa}$. Then one moves from $s$ to a state
$\bar{s}$ with probability $m_{s\bar{s}}$ and emits the symbol $X_2=b$
with probability $e_{\bar{s}b}$ and so on.\par We further observe that
$M$ decomposes as $M=\sum_{a\in \Sigma} T_a$ where
\begin{equation}
    (T_a)_{s\bar{s}} := e_{sa}\cdot m_{s\bar{s}} 
\end{equation}
which reflect the probabilities to emit symbol $a$ from state $s$ and
subsequently to move on to state $\bar{s}$. In addition, we use the notation
\begin{equation}
O_a:=\diag(e_{1a},...,e_{da})T_a=O_aM.
\end{equation}
which yields $T_a=O_aM$. Correspondingly, we write
$\Theta=(M,(O_a)_{a\in\S},\pi)$ for HMP parametrizations.  In analogy
to finitary process notation, we write
\begin{equation}
  \label{eq.hmptv}
  T_v := T_{a_1}T_{a_2}\ldots T_{a_{n-1}}T_{a_n}=O_{a_1}MO_{a_2}M\ldots O_{a_{n-1}}MO_{a_n}M\in\R^{d\times d} 
\end{equation}
for any $v=a_1\ldots a_n\in \Sigma^n$.  Standard technical
computations then reveal
that, for $v=a_1...a_n\in\S^n$
\begin{equation}
\label{eq.alternativehmm}
\begin{split}
p(v) = \pi'T_{a_1}...T_{a_n}\mathbf{1} = \pi'T_v\mathbf{1},
\end{split}
\end{equation}
where $\mathbf{1}=(1,...,1)'\in\R^d$ is the vector of all ones. 

\begin{Rem}
\label{rem.forwardbackward}
Computation of vectors $\pi'T_v\in\R^{1\times d}$ and
$T_v\mathbf{1}\in\R^{d\times 1}$ reflects the well-known Forward and
Backward algorithms (\emph{e.g.}  \cite{Ephraim02}) for computation of
HMP probabilities. In this respect, entries of these vectors are just
the common Forward and Backward variables. That is
\begin{gather}
(\pi'T_v)'_s = \Pr(S_{n+1} = s\mid X_1=a_1,...,X_n=a_n)\\
(T_v\mathbf{1})_s = \Pr(S_t = s\mid X_{t+1}=a_{1},...,X_{t+n}=a_{n})
\end{gather}
where $(S_t)$ is the (non-observable) Markov process which takes
values in the hidden states $\{1,...,d\}$.
\end{Rem}

(\ref{eq.alternativehmm}) makes it obvious that a HMP on $d$ hidden
states is a finitary processes which admits a $d$-dimensional
parametrization. This allows the following definition.

\begin{Def}[Rank of a Hidden Markov Process]
\label{d.hmmrank}
The \emph{rank} of a hidden Markov process $(X_t)$ is its rank 
as a finitary process.
\end{Def}

The definition gives rise to the following trivial proposition.

\begin{Prop}
\label{p.hmmrank}
A hidden Markov process acting on $d$ hidden states is a finitary 
process of rank at most $d$.
\end{Prop}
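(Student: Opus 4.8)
The plan is to produce the HMP parametrization directly as a $d$-dimensional finitary parametrization, so that the statement follows immediately from the definitions. By \eqref{eq.alternativehmm}, a HMP on $d$ hidden states with parameters $\Theta=(M,(O_a)_{a\in\S},\pi)$ satisfies
\[
p(a_1\ldots a_n) = \pi'T_{a_1}\cdots T_{a_n}\mathbf{1},
\qquad T_a = O_aM \in \R^{d\times d},
\]
which is exactly the form demanded of a finitary process in Definition~\ref{d.finitary}, with matrices $T_a\in\R^{d\times d}$ and vector $\pi\in\R^d$. It remains only to check that the two structural constraints of a finitary parametrization are inherited from the defining constraints of $\cH_{d,+}$.

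First I would verify the unit-row-sum condition $(\sum_{a\in\S}T_a)\mathbf{1}=\mathbf{1}$. For this I would invoke the decomposition $M=\sum_{a\in\S}T_a$ recorded above together with the fact that $M$ is a transition probability matrix with unit row sums, so that $(\sum_{a\in\S}T_a)\mathbf{1}=M\mathbf{1}=\mathbf{1}$. Second, the requirement $\pi'\mathbf{1}=1$ follows directly from $\pi$ being an initial probability distribution over the $d$ hidden states, i.e. $\sum_s\pi_s=1$. Both constraints are thus immediate from the stochasticity built into $\Theta$.

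Having established these two conditions, the HMP is a finitary process admitting a $d$-dimensional parametrization, whence by Definition~\ref{d.finitaryrank} its rank---the minimal dimension of an admitted parametrization---is at most $d$. There is essentially no obstacle here, as the author indicates; the only point requiring a modicum of care is the bookkeeping of which finitary constraints correspond to which stochasticity conditions on $(M,\pi)$, and both matches are transparent.
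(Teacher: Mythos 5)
Your proof is correct and follows exactly the route the paper takes: the paper declares the proposition ``trivial'' on the strength of \eqref{eq.alternativehmm}, which exhibits $(T_a=O_aM)_{a\in\S}$ and $\pi$ as a $d$-dimensional finitary parametrization, and your write-up simply makes explicit the two constraint checks ($M=\sum_a T_a$ with $M\mathbf{1}=\mathbf{1}$, and $\pi'\mathbf{1}=1$) that the paper leaves implicit.
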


\begin{Xmp}[HMPs on $d$ hidden states of rank $d$]
\label{x.fullrankhmm}
Let $\S$ such that $|\S|\ge 2$. Let
$\lambda_1,...,\lambda_d\in(0,1)$ be pairwise different. Consider HMP
parametrizations $(M,E,\pi)$ where
\begin{equation}
M = \Id\in\R^{d\times d},\quad \pi=(\frac1d,...,\frac1d) = \frac1d\mathbf{1}\in\R^d
\end{equation}
where there is $a\in\S$ such that
\begin{equation}
O_a=\diag(\lambda_1,...,\lambda_d).
\end{equation}
The $O_b=\diag(e_{1b},...,e_{db}),b\in\S\setminus\{a\}$ can be chosen
arbitrarily.  Observe that
\begin{equation}
  S(\lambda):=(\mathbf{1}'\Id,...,\mathbf{1}'O_a^{d-1})
  = 
  \begin{pmatrix}
    1 & \cdots & 1\\
    \lambda_1 & \cdots & \lambda_d\\
    \vdots & \ddots & \vdots\\
    \lambda_1^{d-1} & \cdots & \lambda_d^{d-1}
  \end{pmatrix} 
  = [\lambda_j^{i-1}]_{1\le i,j\le d}\in\R^{d\times d}
\end{equation}
forms a Vandermonde matrix and hence is invertible.  Writing
$a^i:=a...a\in\S^i$, it follows that

\begin{equation}
[p(a^{i-1}a^{j-1})]_{1\le i,j\le d} = [\frac1d\cdot\mathbf{1}'O_a^{i-1}O_a^{j-1}\mathbf{1}]_{1\le i,j\le d} 
= \frac1d\cdot S(\lambda)S(\lambda)'\in\R^{d\times d}
\end{equation}
is an invertible matrix. By Proposition~\ref{p.finitarynecessary}, the
hidden Markov process with parametrization $(M,(O_a)_{a\in\S},\pi)$
has rank $d$.
\end{Xmp}

\section{Models}
\label{sec.models}

\subsection{Finitary Models}
\label{ssec.finmodels}

Finitary models $\mathbf{f}_{\cM_d,n}$ are the algebraic statistical
equivalent of {\em finitary processes} that admit $d$-dimensional
parametrizations.

\begin{Def}
\label{d.mmm}
\emph{Finitary models} are 
polynomial maps
\begin{equation}
\label{eq.gnd}
\begin{array}{rccc}
\mathbf{f}_{\cM_d,n}: & \cM_d & \longrightarrow & \C^{\S^n}\\
                 & ((T_a)_{a\in\S}),\pi) & \mapsto & (\pi'T_v\mathbf{1})_{v\in\S^n}.
\end{array}
\end{equation}
where 
\begin{equation}
\label{eq.finitarydef}
\cM_d := \{((T_a)_{a\in\S}),\pi)\in\C^{|\S|d^2+d}\mid 
\sum_{a\in\S}T_a\mathbf{1} = \mathbf{1}\} \cong \C^{|\S|d^2}
\end{equation}
\end{Def}

We write 
\begin{equation*}
  V_{\cM_d,n}:=\overline{\im\mathbf{f}_{\cM_d,n}}
\end{equation*}
for the variety that is associated with $\mathbf{f}_{\cM_d,n}$ and
\begin{equation*}
  I_{\cM_d,n}:=I_{\mathbf{f}_{\cM_d,n}}
\end{equation*}
for the ideal of its invariants. Unlike in the definition of finitary
processes, we do not require that $\pi'\mathbf{1}=1$ which would add
the (technically inconvenient) inhomogeneous invariant $\sum_vp_v = 1$
to $I_{\cM_d,n}$. The relationship
$\sum_aT_a\mathbf{1}=\mathbf{1}$ yields that the family
$(\mathbf{f}_{\cM_d,n})_{n\in\N}$ is an algebraic process model.

\begin{Prop}
\label{p.finitaryprocessmodel}
The family $(\mathbf{f}_{\cM_d,n})_{n\in\N}$ is an algebraic process
model.
\end{Prop}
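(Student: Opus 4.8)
The plan is to verify the defining identity \eqref{eq.processmodel} of an algebraic process model directly, reducing it to the single structural constraint $\sum_{a\in\S}T_a\mathbf{1}=\mathbf{1}$ that cuts out $\cM_d$ in \eqref{eq.finitarydef}. First I would fix $1\le m\le n$ and a string $u\in\S^m$, write $z=((T_a)_{a\in\S},\pi)\in\cM_d$, and use the multiplicativity $T_{uw}=T_uT_w$ together with the definition \eqref{eq.gnd} to rewrite the right-hand side of \eqref{eq.processmodel} as
\[
\sum_{w\in\S^{n-m}}\mathbf{f}_{\cM_d,n}(z)_{uw}=\sum_{w\in\S^{n-m}}\pi'T_uT_w\mathbf{1}=\pi'T_u\Big(\sum_{w\in\S^{n-m}}T_w\mathbf{1}\Big).
\]
The whole claim then collapses to the vector identity $\sum_{w\in\S^k}T_w\mathbf{1}=\mathbf{1}$ for every $k\ge 0$, after which the expression above equals $\pi'T_u\mathbf{1}=\mathbf{f}_{\cM_d,m}(z)_u$, which is precisely \eqref{eq.processmodel}.

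I would establish that vector identity by induction on $k$. For $k=0$ the only string is the empty string, $T_\epsilon=\Id$, and the claim reads $\mathbf{1}=\mathbf{1}$. For the inductive step I would split each $w\in\S^{k+1}$ as $w=aw'$ with $a\in\S$ and $w'\in\S^{k}$, so that $T_w=T_aT_{w'}$, and compute
\[
\sum_{w\in\S^{k+1}}T_w\mathbf{1}=\sum_{a\in\S}T_a\Big(\sum_{w'\in\S^{k}}T_{w'}\mathbf{1}\Big)=\sum_{a\in\S}T_a\mathbf{1}=\mathbf{1},
\]
where the middle equality applies the induction hypothesis and the last equality is the defining relation of $\cM_d$. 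This closes the induction and hence the proposition.

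I do not anticipate a genuine obstacle: the statement is the algebraic shadow of the elementary marginalization identity \eqref{eq.process1} for stochastic processes, and the constraint $\sum_aT_a\mathbf{1}=\mathbf{1}$ was imposed in \eqref{eq.finitarydef} precisely so that summing out the trailing letters acts trivially. The only points I would state carefully are (i) that \eqref{eq.processmodel} must hold for \emph{all} $z\in\cM_d$, i.e.\ as a polynomial identity in the parameters, which the computation above supplies since it uses nothing beyond the one defining relation of $\cM_d$, valid on the common domain for every $n$; and (ii) that each $\mathbf{f}_{\cM_d,n}$ is already an algebraic statistical model in the sense of Definition \ref{d.algmodel}, its coordinates $\pi'T_v\mathbf{1}$ being manifestly polynomial in the entries of the $T_a$ and of $\pi$, with the stochastic parametrizations serving as the parameter set $\cS$. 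Together these yield that $(\mathbf{f}_{\cM_d,n})_{n\in\N}$ satisfies Definition \ref{d.processmodel}.
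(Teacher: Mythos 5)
Your proof is correct and takes essentially the same route as the paper: your inductive identity $\sum_{w\in\S^k}T_w\mathbf{1}=\mathbf{1}$ is exactly the statement $M^k\mathbf{1}=\mathbf{1}$ for $M:=\sum_{a\in\S}T_a$, which the paper invokes in a single line by writing the marginalization as $\pi'T_vM^{n-m}\mathbf{1}$ and applying the defining relation of $\cM_d$. The only difference is presentational, in that you unpack the matrix power into an explicit induction.
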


\begin{proof}
Let $v\in\S^m$. Writing $M:=\sum_aT_a$ we observe
\begin{equation}
\label{eq.finitaryprocess}
\mathbf{f}_{\cM_d,m}(z)_v = \pi'T_v\mathbf{1} 
\stackrel{(\ref{eq.finitarydef})}{=} 
\pi'T_vM^{n-m}\mathbf{1} = \sum_{u\in\S^{n-m}}\mathbf{f}_{\cM_d,n}(z)_{vu}.
\end{equation}
\qed
\end{proof}

By the definition of finitary process models one can further register:

\begin{Prop}
  \label{p.finitaryobservations}
  For all $d,n\in\N$ it holds that $\im\mathbf{f}_{\cM_d,n}\subset\im\mathbf{f}_{\cM_{d+1},n}$.
\end{Prop}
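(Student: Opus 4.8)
The plan is to prove the stronger, pointwise inclusion: every point of $\im\mathbf{f}_{\cM_d,n}$ is realized by an explicit $(d+1)$-dimensional parametrization. This yields the inclusion directly, with no recourse to closures or dimension counts, and is the algebraic-model analogue of the remark after Definition~\ref{d.finitary} that a $d$-dimensional finitary parametrization also admits one of dimension $d+1$.

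Concretely, given $((T_a)_{a\in\S},\pi)\in\cM_d$, I would define a padded $(d+1)$-dimensional parametrization $((\tilde T_a)_{a\in\S},\tilde\pi)$ by setting $\tilde\pi := (\pi',0)'\in\C^{d+1}$ and
\begin{equation*}
  \tilde T_a := \begin{pmatrix} T_a & 0 \\ 0 & \delta_a \end{pmatrix}\in\C^{(d+1)\times(d+1)},
\end{equation*}
where the off-diagonal blocks are zero and the scalars $\delta_a$ are chosen so that $\sum_{a\in\S}\delta_a = 1$ (for instance $\delta_{a_0}=1$ for a single fixed letter $a_0\in\S$ and $\delta_a=0$ otherwise). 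The key structural feature is that the top-right block of each $\tilde T_a$ vanishes and the last coordinate of $\tilde\pi$ is $0$.

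Next I would check membership in $\cM_{d+1}$, i.e.\ the unit-row-sum condition $\sum_a\tilde T_a\mathbf{1}_{d+1}=\mathbf{1}_{d+1}$. Since
\begin{equation*}
  \tilde T_a\mathbf{1}_{d+1} = \begin{pmatrix} T_a\mathbf{1}_d \\ \delta_a \end{pmatrix},
\end{equation*}
summing over $a$ gives $\sum_a T_a\mathbf{1}_d=\mathbf{1}_d$ in the first $d$ coordinates by the defining constraint of $\cM_d$, and $\sum_a\delta_a=1$ in the last, so the condition holds. Finally, to see that the probabilities agree, I would argue by induction on prefix length: because $\tilde\pi$ has vanishing last coordinate and each $\tilde T_a$ has zero top-right block, one gets $\tilde\pi'\tilde T_{a_1}\cdots\tilde T_{a_k}=(\pi'T_{a_1}\cdots T_{a_k},\,0)$ for every $k$, the last coordinate staying zero throughout. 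Evaluating at the all-ones vector yields $\tilde\pi'\tilde T_v\mathbf{1}_{d+1}=\pi'T_v\mathbf{1}_d$ for all $v\in\S^n$, hence $\mathbf{f}_{\cM_{d+1},n}((\tilde T_a)_{a\in\S},\tilde\pi)=\mathbf{f}_{\cM_d,n}((T_a)_{a\in\S},\pi)$.

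There is no genuine obstacle here; the construction is elementary and the only point needing care is the bookkeeping that the appended state is never visited, which is precisely what the zero last coordinate of $\tilde\pi$ together with the zero top-right blocks of the $\tilde T_a$ guarantees. One could absorb the row-sum constraint into the new state differently, but the block-diagonal padding above keeps the verification of the $\cM_{d+1}$ membership condition maximally transparent.
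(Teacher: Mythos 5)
Your proof is correct and takes essentially the same route as the paper, which disposes of the proposition in one line by noting that one can ``extend $d$-dimensional matrices by zero entries'' to obtain a $(d+1)$-dimensional parametrization. Your version is in fact the more careful one: literal all-zero padding would violate the defining constraint $\sum_{a\in\S}\tilde T_a\mathbf{1}=\mathbf{1}$ of $\cM_{d+1}$, and your placement of the scalars $\delta_a$ with $\sum_{a\in\S}\delta_a=1$ in the new diagonal corner (together with the zero top-right blocks and the zero last coordinate of $\tilde\pi$) repairs exactly that point.
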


\begin{proof}
   This is because one can extend $d$-dimensional matrices by zero
   entries to obtain a $d+1$-dimensional parametrization and reflects
   that every finitary process with a $d$-dimensional parametrization
   also admits a $d+1$-dimensional parametrization.\qed
\end{proof}

\subsection{Hidden Markov Models}

We obtain an algebraic statistical treatment of HMPs on $d$ hidden
states by allowing that parameters in $M,E$ and $\pi$ are complex. We
write
\begin{equation}
\cH_d:=\{(M,E,\pi)\in\C^{d^2+d|\S|+d} \mid
\sum_{j=1}^nm_{ij}=1,\sum_{a\in\S}e_{ia}=1\}\cong\C^{d^2+d(|\S|-1)}
\end{equation}
for the resulting set of parameters. We still require unit
row sums in both $M$ and $E$, but we do not make any such assumption
for $\pi$. The unit row sum
assumption for $E$ implies that still
\begin{equation}
M = \sum_{a\in\S}T_a\quad\text{where}\quad (T_a)_{s\bar{s}} = e_{sa}m_{s\bar{s}}
\end{equation}
while the unit row sum assumption on $M$ implies that $M\mathbf{1} =
\mathbf{1}$ hence (let $v\in\S^m$ and $m<n$)
\begin{equation}
\label{eq.hmmprocess}
p(v) = \pi'T_v\mathbf{1} = \pi'T_vM^m\mathbf{1} = \sum_{u\in\S^{n-m}}p(vu)
\end{equation}
a relationship which holds for stochastic processes in general. Note 
that
\begin{equation}
  \label{eq.dimhd}
  \dim\cH_d = d^2+d(|\S|-1) \stackrel{(\ref{eq.dimhdplus})}{=} \dim\cH_{d,+} + 1.
\end{equation}
The increase in dimension for $\cH_d$ follows from not requiring that
$\pi$ is a unit vector---in analogy to finitary models we avoid the
non-homogeneous invariant $\sum_vp_v=1$ for technical convenience.\\

\begin{Def}
We recall the notation \eqref{eq.hmptv} and say that 
\label{d.hmm}
\begin{equation}
  \label{eq.algstathmm}
  \begin{array}{rccc}
    \mathbf{f}_{\cH_d,n}: & \cH_d     & \longrightarrow & \C^{|\S|^n}\\
    & (M,(O_a)_{a\in\S},\pi) & \mapsto        & (\pi'T_v\mathbf{1})_{v\in\S^n}
  \end{array}
\end{equation}
is a {\em hidden Markov model} for $d$ hidden states and string length $n$.
\end{Def}

The relationship \eqref{eq.hmmprocess} yields further:

\begin{Prop}
\label{p.hmmprocess}
The family $(\mathbf{f}_{\cH_d,n})_{n\in\N}$ of hidden Markov models
for $d$ hidden states is an algebraic process model.
\end{Prop}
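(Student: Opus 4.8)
The plan is to observe that this proposition is essentially a restatement of the relationship \eqref{eq.hmmprocess} already established above, and that its verification runs exactly parallel to the proof of Proposition~\ref{p.finitaryprocessmodel} for finitary models. Concretely, I would fix $d$, fix integers $1\le m\le n$, take an arbitrary parametrization $\Theta=(M,E,\pi)\in\cH_d$ and a string $u\in\S^m$, and check the defining identity \eqref{eq.processmodel} of an algebraic process model, namely that $\mathbf{f}_{\cH_d,m}(\Theta)_u = \sum_{w\in\S^{n-m}}\mathbf{f}_{\cH_d,n}(\Theta)_{uw}$.

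The two structural facts that make this work are both retained in the complex parameter set $\cH_d$. First, the unit row sum condition $\sum_{a\in\S}e_{ia}=1$ on $E$ yields the decomposition $M=\sum_{a\in\S}T_a$, so that $M^{n-m}=\big(\sum_{a\in\S}T_a\big)^{n-m}=\sum_{w\in\S^{n-m}}T_w$, where $T_w=T_{b_1}\cdots T_{b_{n-m}}$ for $w=b_1\cdots b_{n-m}$. Second, the unit row sum condition $\sum_j m_{ij}=1$ on $M$ gives $M\mathbf{1}=\mathbf{1}$ and hence $M^{n-m}\mathbf{1}=\mathbf{1}$. Chaining these together I would compute
\begin{equation*}
\mathbf{f}_{\cH_d,m}(\Theta)_u = \pi'T_u\mathbf{1} = \pi'T_uM^{n-m}\mathbf{1} = \pi'T_u\Big(\sum_{w\in\S^{n-m}}T_w\Big)\mathbf{1} = \sum_{w\in\S^{n-m}}\pi'T_{uw}\mathbf{1} = \sum_{w\in\S^{n-m}}\mathbf{f}_{\cH_d,n}(\Theta)_{uw},
\end{equation*}
using $T_uT_w=T_{uw}$ in the penultimate step; this is precisely \eqref{eq.processmodel}.

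I expect no substantive obstacle here: the identity is literally the content of \eqref{eq.hmmprocess}, and each map $\mathbf{f}_{\cH_d,n}$ is manifestly polynomial in the entries of $M,E,\pi$, so every member of the family is an algebraic statistical model in the sense of Definition~\ref{d.algmodel}. The one point worth flagging is that the argument invokes only the two unit-row-sum constraints defining $\cH_d$ and never the normalization $\pi'\mathbf{1}=1$; this is consistent with the deliberate omission of the inhomogeneous invariant $\sum_v p_v=1$ noted around \eqref{eq.dimhd}, and it is exactly what lets the process-model property hold on the full complex parameter set $\cH_d$ rather than only on the stochastic parametrizations $\cH_{d,+}$.
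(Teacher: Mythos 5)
Your proof is correct and follows essentially the same route as the paper: the paper justifies this proposition by pointing to the relationship \eqref{eq.hmmprocess}, which is exactly the computation $\pi'T_u\mathbf{1} = \pi'T_uM^{n-m}\mathbf{1} = \sum_{w\in\S^{n-m}}\pi'T_{uw}\mathbf{1}$ that you spell out, resting on the same two unit-row-sum facts ($M=\sum_a T_a$ from the constraint on $E$, and $M\mathbf{1}=\mathbf{1}$ from the constraint on $M$). Your additional remark that the normalization $\pi'\mathbf{1}=1$ is never needed matches the paper's own discussion around \eqref{eq.dimhd}.
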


We write 
\begin{equation}
V_{\cH_d,n}:=\overline{\mathbf{f}_{\cH_d,n}(\cH_d)} 
\end{equation}
for the algebraic variety that is associated with
$\mathbf{f}_{\cH_d,n}$ and
\begin{equation*}
I_{\cH_d,n}:=I_{\mathbf{f}_{\cH_d,n}}
\end{equation*}
for the ideal of its invariants. 
\begin{Prop}
\label{p.hmmobservations}
For all $d,n\in\N$:
\begin{itemize}
\item[(a)]
  $\im\mathbf{f}_{\cH_d,n}\;\subset\;\im\mathbf{f}_{\cH_{d+1},n}$
\item[(b)] $\im\mathbf{f}_{\cH_d,n}\;\subset\;\im\mathbf{f}_{\cM_d,n}$.
\item[(c)] $V_{\cH_d,n}\;\subset\; V_{\cM_d,n}$. 
\item[(d)] $I_{\cM_d,n}\;\subset\; I_{\cH_d,n}$.
\end{itemize}
\end{Prop}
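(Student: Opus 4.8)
The plan is to treat (b) as the one substantive computation, to obtain (c) and (d) from it by purely formal means, and to handle (a) by a separate explicit embedding. Only (a) and (b) require any construction at all; (c) and (d) are soft consequences.

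For (b), I would start from an arbitrary $\Theta=(M,E,\pi)\in\cH_d$ and check that the associated tuple $((T_a)_{a\in\S},\pi)$ with $T_a=O_aM$ already lies in $\cM_d$. The only thing to verify is the defining relation $\sum_{a\in\S}T_a\mathbf{1}=\mathbf{1}$ of $\cM_d$ from (\ref{eq.finitarydef}). Here the unit row sum condition on $E$ enters: since $O_a=\diag(e_{1a},\dots,e_{da})$ and $\sum_{a\in\S}e_{sa}=1$ for every $s$, one has $\sum_{a\in\S}O_a=\Id$, whence $\sum_{a\in\S}T_a=(\sum_{a\in\S}O_a)M=M$, and the unit row sum condition on $M$ then gives $\sum_{a\in\S}T_a\mathbf{1}=M\mathbf{1}=\mathbf{1}$. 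Since (\ref{eq.algstathmm}) and (\ref{eq.gnd}) express both maps by the identical formula $(\pi'T_v\mathbf{1})_{v\in\S^n}$, I conclude $\mathbf{f}_{\cH_d,n}(\Theta)=\mathbf{f}_{\cM_d,n}((T_a)_a,\pi)$, which proves (b).

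Parts (c) and (d) are then formal. For (c) I would take topological closures in (b): since closure is monotone, $\im\mathbf{f}_{\cH_d,n}\subset\im\mathbf{f}_{\cM_d,n}$ yields $V_{\cH_d,n}=\overline{\im\mathbf{f}_{\cH_d,n}}\subset\overline{\im\mathbf{f}_{\cM_d,n}}=V_{\cM_d,n}$. For (d) I would invoke the inclusion-reversing correspondence between varieties and their radical ideals recalled in Section~\ref{sec.prelim}: because $I_{\cM_d,n}$ and $I_{\cH_d,n}$ are exactly the prime ideals of invariants of the closed varieties $V_{\cM_d,n}$ and $V_{\cH_d,n}$, the inclusion $V_{\cH_d,n}\subset V_{\cM_d,n}$ gives $I_{\cM_d,n}=I(V_{\cM_d,n})\subset I(V_{\cH_d,n})=I_{\cH_d,n}$. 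For (a), the plan is to realize a $d$-state HMP as a $(d+1)$-state HMP by adjoining a dummy hidden state that is never entered: given $(M,E,\pi)\in\cH_d$, set $\tilde\pi=(\pi',0)'$, take $\tilde M=\begin{pmatrix} M & \mathbf{0}\\ \mathbf{0}' & 1\end{pmatrix}$, and let $\tilde E$ be $E$ with one adjoined row equal to an arbitrary probability distribution over $\S$. Both unit row sum conditions are then preserved, so $(\tilde M,\tilde E,\tilde\pi)\in\cH_{d+1}$. The matrices $\tilde T_a=\tilde O_a\tilde M$ are block lower triangular with top-left block $T_a$, hence so is every product $\tilde T_v$; combined with the vanishing last coordinate of $\tilde\pi$ this yields $\tilde\pi'\tilde T_v\mathbf{1}=\pi'T_v\mathbf{1}$ for all $v\in\S^n$, establishing (a).

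There is no deep obstacle here. The only points that need care are the simultaneous preservation of both stochasticity constraints in the embedding of (a), and, in (d), the fact that the asserted inclusion is between the radical (here prime) ideals of the closed image varieties rather than between arbitrary defining ideals, so that the variety--ideal correspondence of Section~\ref{sec.prelim} applies verbatim.
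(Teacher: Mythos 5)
Your proposal is correct and follows essentially the same route as the paper's (terse) proof: (b) by direct verification from the definitions, (c) from (b) by taking closures, (d) from (c) via the inclusion-reversing variety--ideal correspondence, and (a) by embedding a $d$-state parametrization into $\cH_{d+1}$. Your treatment of (a) is in fact slightly more careful than the paper's phrase ``extend matrices by zero entries,'' since a literal zero extension would violate the unit row sum constraints on $\tilde M$ and $\tilde E$; your absorbing dummy state with an arbitrary emission row is the right way to make that sketch precise.
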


While (a) reflects that HMPs on $d+1$ hidden states encompass the HMPs
on $d$ hidden states, (d) translates to the fact that each invariant
of a finitary model applies for the corresponding hidden Markov
model. (d) is a key observation for this work.

\begin{proof} 
  (a) holds because one can extend matrices by zero entries
  thereby obtaining higher-dimensional parametrizations, (b) is
  obvious by the definitions of hidden Markov and finitary process
  models while (c) immediately follows from (b). (c) and (d) finally
  are equivalent, due to elementary algebraic geometric arguments
  \cite{Cox}.\qed
\end{proof}

\section{Dimension}
\label{sec.dim}

\subsection{Finitary Models}
\label{ssec.findim}

In this section we compute the dimension of the variety $V_{\cM_d,n}$
for $n\ge 2d-1$. The key insight to this computation is the following
lemma.

\begin{Lem}
  \label{l.imagedim}
  Let $n\ge 2d-1$ and let
  $\Theta:=((T_a)_{a\in\S},x),\tilde{\Theta}:=((\tilde{T}_a)_{a\in\S},\tilde{x})\in\cM_d$
  be two parameterizations giving rise to finitary processes.
  Consider the following two statements:
  \begin{enumerate}
  \item[(i)]\begin{equation}
      \label{eq.preimage1}
      \mathbf{f}_{\cM_d,n}(\Theta) = \mathbf{f}_{\cM_d,n}(\tilde{\Theta})
    \end{equation}
  \item[(ii)]There exists an invertible linear map $S:\C^d\to\C^d$ such that
    \begin{equation}
      \label{eq.preimage2} 
      S\mathbf{1} = \mathbf{1},\qquad \tilde{x}' = x'S\qquad\text{and}\qquad
      \forall a\in\S:\quad \tilde{T}_{a} = S^{-1}T_aS
    \end{equation}
  \end{enumerate}
  Then $(ii)$ implies $(i)$ and the two statements are equivalent if
  both $\Theta,\tilde{\Theta}$ give rise to %finitary
  processes of rank $d$.
\end{Lem}

\begin{proof}
  While $(ii)\Rightarrow(i)$ is obvious, $(i)\Rightarrow(ii)$ is
  a straightforward generalization of statements
  presented in previous works (e.g.~\cite{Ito92,Jaeger00}) to
  complex-valued parameters $\Theta,\tilde{\Theta}$. \qed
\end{proof}

Lemma \ref{l.imagedim} enables application of a well-known theorem
\cite[Th.~11.12]{Harris} for computing dimensions of varieties.

\begin{Thm}
\label{t.mmmdim}
Let $\mathbf{f}_{\cM_d,n}$ as in Definition \ref{d.mmm} such that $n\ge
2d-1$. Then 
\begin{equation}
\dim V_{\cM_d,n} = \begin{cases} 1 & |\S| =1\\
                                                     (|\S|-1)d^2 + d & |\S|\ge 2
                                         \end{cases}.
\end{equation}
\end{Thm}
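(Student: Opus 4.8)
The plan is to realize $\mathbf{f}_{\cM_d,n}$ as a dominant morphism between irreducible varieties and to apply the fiber-dimension theorem \cite[Th.~11.12]{Harris}, which asserts that for such a morphism the dimension of a generic fiber equals the difference of the dimensions of domain and image. Since $V_{\cM_d,n}=\overline{\im\mathbf{f}_{\cM_d,n}}$ is by construction the closure of the image of the irreducible variety $\cM_d\cong\C^{|\S|d^2}$ (see \eqref{eq.finitarydef}), it is irreducible and $\mathbf{f}_{\cM_d,n}$ is dominant onto it. Hence
\[\dim V_{\cM_d,n} = \dim\cM_d - \dim F = |\S|d^2 - \dim F,\]
where $F$ denotes a generic fiber, and the entire problem reduces to computing $\dim F$.

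First I would dispose of the degenerate case $|\S|=1$: here $\S^n$ is a singleton and the relation $T_a\mathbf{1}=\mathbf{1}$ forces $\pi'T_a^n\mathbf{1}=\pi'\mathbf{1}$, so the image is exactly $\{\pi'\mathbf{1}\mid\pi\in\C^d\}=\C$, giving $\dim V_{\cM_d,n}=1$.

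For $|\S|\ge 2$ the central step is to identify the generic fiber via Lemma \ref{l.imagedim}. A generic point of $V_{\cM_d,n}$ lies outside $\overline{\im\mathbf{f}_{\cM_{d-1},n}}$: this inclusion is proper because Example \ref{x.fullrankhmm}, together with Proposition \ref{p.hmmrank}, exhibits a finitary process of rank exactly $d$, i.e.\ a parametrization satisfying \eqref{eq.dimcond}, and the complement of this proper closed subset is Zariski-open. Over such a point Lemma \ref{l.imagedim} tells us that any two preimages are conjugate by an invertible $S$ with $S\mathbf{1}=\mathbf{1}$ and $\tilde x'=x'S$. Thus the fiber through a generic $\Theta=((T_a)_{a\in\S},x)$ is exactly its orbit under the group $G:=\{S\in\C^{d\times d}\mid \det S\ne 0,\ S\mathbf{1}=\mathbf{1}\}$, an open dense subset of the affine space cut out by the $d$ row-sum conditions, so that $\dim G=d^2-d$.

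It then remains to evaluate the orbit dimension as $\dim G - \dim\mathrm{Stab}(\Theta)$. The stabilizer consists of those $S\in G$ that commute with every $T_a$ and satisfy $x'S=x'$. Here is where I expect the main obstacle: I must show that for generic $\Theta$ the matrices $\{T_a\}_{a\in\S}$ generate the full matrix algebra $\C^{d\times d}$, so that their joint centralizer collapses to the scalars $\C\,\mathrm{Id}$; intersecting with $S\mathbf{1}=\mathbf{1}$ then forces $S=\mathrm{Id}$, hence a trivial stabilizer. For $|\S|\ge 2$ this rests on the classical fact that a generic pair of $d\times d$ matrices generates $\C^{d\times d}$, its failure locus being a proper subvariety; I would check that the defining constraint $\sum_aT_a\mathbf{1}=\mathbf{1}$ of $\cM_d$ does not confine the $T_a$ to this locus, for which the Vandermonde construction of Example \ref{x.fullrankhmm} supplies an explicit witness. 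With trivial stabilizer the generic fiber satisfies $\dim F=d^2-d$, and substituting back yields $\dim V_{\cM_d,n}=|\S|d^2-(d^2-d)=(|\S|-1)d^2+d$, completing the proof.
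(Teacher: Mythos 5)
Your overall route is exactly the paper's: dispose of $|\S|=1$ directly, use Example~\ref{x.fullrankhmm} to produce a point of rank exactly $d$, identify the fiber over such a point with a single orbit of the group $G=\{S\in\C^{d\times d}\mid\det S\neq 0,\ S\mathbf{1}=\mathbf{1}\}$ via Lemma~\ref{l.imagedim}, and apply \cite[Th.~11.12]{Harris} to convert the generic fiber dimension into $\dim V_{\cM_d,n}=\dim\cM_d-\dim F$. You are in fact more careful than the paper at one point: the paper simply asserts that the generic fiber has dimension $\dim G=d(d-1)$, whereas you correctly observe that an orbit has dimension $\dim G-\dim\mathrm{Stab}(\Theta)$, so that finiteness of the generic stabilizer is a genuine ingredient of the count, not a formality.

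However, the witness you cite to kill the stabilizer does not do the job. In Example~\ref{x.fullrankhmm} the transition matrix is $M=\Id$, so every $T_a=O_aM=O_a$ is \emph{diagonal}; the algebra generated by the $T_a$ is therefore contained in the commutative algebra of diagonal matrices and is never all of $\C^{d\times d}$ for $d\ge 2$. Hence this example cannot certify that the constraint $\sum_aT_a\mathbf{1}=\mathbf{1}$ is compatible with full-algebra generation. The gap is easily repaired in either of two ways. (a) Keep the example but argue directly: some $O_a$ has $d$ pairwise distinct eigenvalues, so any $S$ commuting with all $T_a$ is diagonal, and a diagonal $S$ with $S\mathbf{1}=\mathbf{1}$ equals $\Id$; thus the stabilizer at this point is trivial. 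Since the stabilizer is the invertible locus of the affine-linear space $\{S\mid ST_a=T_aS\ \forall a\in\S,\ x'S=x',\ S\mathbf{1}=\mathbf{1}\}$, whose dimension is upper semicontinuous in $\Theta$, triviality holds on a nonempty open subset of the irreducible $\cM_d$, hence generically, which is all you need (the rank-$d$ locus and the trivial-stabilizer locus are both nonempty and open, so they intersect densely). (b) Keep your full-algebra argument but change the witness: take $T_a=\diag(\lambda_1,...,\lambda_d)$ with pairwise distinct $\lambda_i\in(0,1)$ and $T_b=(\mathbf{1}-T_a\mathbf{1})w'$ with $w'\mathbf{1}=1$ and all entries of $w$ nonzero; then $(T_a+T_b)\mathbf{1}=\mathbf{1}$, and the products $T_a^kT_bT_a^m$, $0\le k,m\le d-1$, span $\C^{d\times d}$ by a Vandermonde argument, so this pair does generate the full matrix algebra and has scalar centralizer. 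With either repair your proof is complete and yields the same count $|\S|d^2-(d^2-d)=(|\S|-1)d^2+d$ as the paper.
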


\begin{proof}
  The case $|\S|=1$ is trivial: $\im\mathbf{f}_{\cM_n,d}=\C^1$ for all
  $n,d$. For the case $|\S|\ge 2$, we proceed by plugging
  $\cM_d,\mathbf{f}_{\cM_d,n}$ here into $X,\pi$ in
  \cite[Th.~11.12]{Harris}. Therefore we first have to observe that
  $\overline{\mathbf{f}_{\cM_d,n}(\cM_d)}$ is a quasi-projective
  variety, which follows from standard arguments.
  Applying \cite[Th.~11.12]{Harris} then yields
  \begin{equation}
    \label{eq.dimcomp}
    \dim V_{\cM_d,n} =
    \dim\overline{\mathbf{f}_{\cM_d,n}(\cM_d)} = \dim\cM_d - \dim\mathbf{f}_{\cM_d,n}^{-1}(\Theta)
  \end{equation}
  where $\Theta$ is chosen such that
  $\dim\mathbf{f}_{\cM_d,n}^{-1}(\Theta)$ is minimal. Since
  $\dim\cM_d=|\S|d^2$, it remains to show that
  \begin{equation}
    \label{eq.mindim}
    \min_{\Theta\in\cM_d}\dim\mathbf{f}_{\cM_d,n}^{-1}(\Theta)=d(d-1).
  \end{equation}
  Therefore, we first observe that, by Example~\ref{x.fullrankhmm},
  stochastic processes of rank $d$ exist. That is, there is $\Theta$
  such that
  $\mathbf{f}_{\cM_d,n}(\Theta)\not\in\im\mathbf{f}_{\cM_{d-1},n}$. Lemma
  \ref{l.imagedim} then states that
  $\mathbf{f}_{\cM_d,n}(\Theta)=\mathbf{f}_{\cM_d,n}(\bar{\Theta})$ if
  and only if there is an invertible linear map $S\in\C^{d\times d}$
  with $S\mathbf{1}=\mathbf{1}$ by which to transform $\Theta$ into
  $\bar{\Theta}$ as further described in Lemma~\ref{l.imagedim}. This
  yields that the fiber $\mathbf{f}_{\cM_d,n}^{-1}(\Theta)$ has
  dimension equal to that of the space of invertible linear maps $S$
  with $S\mathbf{1}=\mathbf{1}$ which is $d(d-1)$.\par If
  $\mathbf{f}_{\cM_d,n}(\Theta)\in\im\mathbf{f}_{\cM_{d-1},n}(\Theta)$,
  Lemma \ref{l.imagedim} states that the existence of invertible
  linear maps $S$ with $S\mathbf{1}=\mathbf{1}$ that transform
  $\Theta$ into another point
  $\bar{\Theta}\in\mathbf{f}_{\cM_d,n}^{-1}(\Theta)$ is only a
  sufficient condition, which implies
  $\dim\mathbf{f}_{\cM_d,n}^{-1}(\Theta)\ge d(d-1)$. In summary, we obtain
  \eqref{eq.mindim}, which concludes the proof.\qed
\end{proof}

\subsection{Hidden Markov Models}
\label{ssec.hmmdim}

Let $\cH_{d,0}\subset\cH_d$ encompass all parametrizations
$\Theta=(M,(O_a)_{a\in\S},\pi)$ such that
\begin{itemize}
\item $M$ is not invertible, \emph{or}
\item there is no $a\in\S$ such that the eigenvalues of $O_a$ are
  pairwise different.
\end{itemize}
Note that $\mathbf{f}_{\cH_d,n}^{-1}(\cM_{d-1,n})$ are the HMM
parametrizations on $d$ hidden states whose rank is less than $d$. We
set
\begin{equation}
  \label{eq.nullset}
  \cN_d:=\mathbf{f}_{\cH_d,n}^{-1}(\cM_{d-1,n}) \cup \cH_{d,0}. 
\end{equation}

\begin{Lem}
  \label{l.genericfiber}
  $\cN_d$ forms a variety of dimension
  \begin{equation}
    \dim\,\cN_d<\dim\cH_d=d^2+d(|\S|-1)
  \end{equation}
  and for $\Theta=(M,E,\pi)\in \cH_d\setminus\cN_d$ it holds that
  \begin{equation}
    \label{eq.genericfiber}
    \card\mathbf{f}_{\cH_d,n}^{-1}(\mathbf{f}_{\cH_d,n}(\Theta)) = d! < \infty
  \end{equation}
\end{Lem}

The cardinality of the generic fiber in \eqref{eq.genericfiber}
reflects that permutation of the $d$ hidden states yields an
equivalent HMP.

\begin{proof}
  Theorems 3.1 and 3.2 in \cite{Baras92} prove this for stationary
  processes. Our proof consists in observing that the stationarity
  assumption in \cite{Baras92} is not used. Moreover, it is
  straightforward to replace real values by complex values.\qed
\end{proof}

\begin{Rem} 
  \cite{Allman09} provide alternative arguments to prove
  identifiability of stationary HMPs. Although formulated only for
  stationary HMPs in \cite{Allman09}, the arguments can be easily
  extended to non-stationary HMPs and also to complex
  values. \cite{Allman09} particularly focus on generic
  identifiability of HMPs from their distributions over strings of
  length $n<2d-1$ for alphabets $|\S|>2$. As they do not explicitly
  name the generic subsets, application of results from the earlier
  \cite{Baras92} yields a more convenient treatment here.
\end{Rem}

\begin{Cor}
  \label{c.genericfiber}
  As real varieties,
  \begin{equation}
    \dim(\cN_d\cap\cH_{d,+}) < \dim\cH_{d,+} = \dim\cH_{d} - 1
  \end{equation}
  and 
  \begin{equation}
    \card\mathbf{f}_{\cH_d,n}^{-1}(\mathbf{f}_{\cH_d,n}(\Theta)) = d!
  \end{equation}
  for $\Theta\in\cH_{d,+}\setminus\cN_d$.
\end{Cor}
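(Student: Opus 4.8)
The plan is to transfer the complex statements of Lemma~\ref{l.genericfiber} to the real stochastic set $\cH_{d,+}$. The fiber count is immediate: since $\cH_{d,+}\setminus\cN_d\subseteq\cH_d\setminus\cN_d$, every $\Theta\in\cH_{d,+}\setminus\cN_d$ already satisfies the hypothesis of Lemma~\ref{l.genericfiber}, whence $\card\mathbf{f}_{\cH_d,n}^{-1}(\mathbf{f}_{\cH_d,n}(\Theta))=d!$. (One may add that the $d!$ preimages are the permutations of the hidden states, which preserve stochasticity and hence themselves lie in $\cH_{d,+}$.) The identity $\dim\cH_{d,+}=\dim\cH_d-1$ is nothing but \eqref{eq.dimhd}.

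The real content is therefore the strict inequality $\dim(\cN_d\cap\cH_{d,+})<\dim\cH_{d,+}$. First I would record that $\cN_d$ is defined over $\R$: by \eqref{eq.nullset} it is the union of $\mathbf{f}_{\cH_d,n}^{-1}(\cM_{d-1,n})$, cut out by the vanishing of the $d\times d$ minors (of the Hankel matrices of Proposition~\ref{p.finitarynecessary}, pulled back through $\mathbf{f}_{\cH_d,n}$) that detect rank drop, and of $\cH_{d,0}$, cut out by $\det M=0$ together with the discriminant factors $\prod_{i<j}(e_{ia}-e_{ja})$; all of these are polynomials with real coefficients. Next I would place $\cH_{d,+}$ inside the real affine subspace $L$ defined by the row-sum conditions on $M$ and $E$ and by $\pi'\mathbf{1}=1$, so that $\dim_\R L=\dim\cH_{d,+}$ and $\cH_{d,+}$ is a full-dimensional, convex (product-of-simplices) region with nonempty interior in $L$.

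The crux is to certify that $\cN_d$ does not contain $\cH_{d,+}$, and here I would invoke Example~\ref{x.fullrankhmm}: with $M=\Id$, $\pi=\frac1d\mathbf{1}$, and an $O_a$ whose eigenvalues $\lambda_1,\dots,\lambda_d\in(0,1)$ are pairwise distinct (completing $E$ to a stochastic matrix, and choosing the remaining $O_b$ to have distinct diagonals as well), one obtains a genuinely stochastic $\Theta_0\in\cH_{d,+}$. By the rank-$d$ conclusion of that example $\Theta_0\notin\mathbf{f}_{\cH_d,n}^{-1}(\cM_{d-1,n})$, while $\det M=1$ and the distinctness of every $O_b$ give $\Theta_0\notin\cH_{d,0}$; thus $\Theta_0\in\cH_{d,+}\setminus\cN_d$. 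Consequently there is a single real polynomial $p$ that vanishes on all of $\cN_d$ yet has $p(\Theta_0)\neq0$ — concretely the product of a rank-detecting $d\times d$ minor with $\det M\cdot\prod_{a}\prod_{i<j}(e_{ia}-e_{ja})$. This $p$ does not vanish identically on $L$, so its zero set meets $\cH_{d,+}$ in a set of real dimension strictly below $\dim_\R L=\dim\cH_{d,+}$; since $\cN_d\cap\cH_{d,+}$ is contained in that zero set, the inequality follows.

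The step demanding the most care is exactly this real-versus-complex transfer: the complex codimension bound of Lemma~\ref{l.genericfiber} does not by itself force the real trace $\cN_d\cap\cH_{d,+}$ to be thin, since a proper complex subvariety can in principle meet a real slice in a set of full real dimension. What rules this out is the explicit \emph{real} witness $\Theta_0$ of Example~\ref{x.fullrankhmm}, together with the elementary fact that a real polynomial that is nonzero at an interior point of a full-dimensional semialgebraic set cannot vanish on a full-dimensional subset of it.
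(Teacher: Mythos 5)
Your proof is correct, but it follows a genuinely different route from the paper's. The paper disposes of the corollary in two sentences: its proof is declared ``completely analogous'' to that of lemma~\ref{l.genericfiber}, i.e.\ it invokes Theorems~3.1 and~3.2 of \cite{Baras92} directly in the real stochastic setting (observing that their stationarity assumption is never used, and that here not even the passage to complex values is needed), with the dimension shift $\dim\cH_{d,+}=\dim\cH_d-1$ explained by the normalization $\pi'\mathbf{1}=1$ as in \eqref{eq.dimhdplus}. You instead \emph{derive} the real statement from the already-established complex lemma: the fiber count via the inclusion $\cH_{d,+}\setminus\cN_d\subset\cH_d\setminus\cN_d$, and the dimension bound via an explicit real polynomial $q\cdot\det M\cdot\prod_a\prod_{i<j}(e_{ia}-e_{ja})$ vanishing on $\cN_d$ (by \eqref{eq.nullset} and proposition~\ref{p.finitarynecessary}) but not at the stochastic witness $\Theta_0$ of example~\ref{x.fullrankhmm}, so that its zero set is thin in the affine span of $\cH_{d,+}$. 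What your route buys is self-containedness and the correct identification of a subtlety the paper glosses over: since $\cH_{d,+}$ spans a proper affine subspace $L$ with $\dim_\R L=\dim_\C\cH_d-1$, the complex bound $\dim\cN_d<\dim\cH_d$ alone would not prevent $\cN_d$ from containing the complexification of $L$, hence all of $\cH_{d,+}$; your real witness $\Theta_0$ is exactly what excludes this, whereas the paper avoids the issue only by re-running the external real-valued argument. What the paper's route buys is brevity and uniformity with the lemma, at the cost of leaning on a reference whose proofs the reader must inspect to confirm that stationarity is inessential. Two small points of care in your write-up: the second component of $\cH_{d,0}$ is the \emph{intersection} over $a\in\S$ of the discriminant loci (no $a$ has distinct eigenvalues), so a single factor $\det M\cdot\prod_{i<j}(e_{ia}-e_{ja})$ for one fixed letter $a$ would already vanish on it, sparing you the need to arrange distinct diagonals for every $O_b$; and your parenthetical that the $d!$ permuted copies of $\Theta$ are stochastic is what makes the fiber count meaningful if one insists on reading the fiber inside $\cH_{d,+}$ rather than $\cH_d$.
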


\begin{proof}
  The proof is analogous to that for Lemma~\ref{l.genericfiber}.  The
  reduction in dimension by $1$ for $\cH_{d,+}$ is due to not
  requiring $\sum_{i=1}^d\pi_i=1$ for $\Theta\in\cH_{d}$, see
  (\ref{eq.dimhdplus}).\qed
\end{proof}

\paragraph{Identification Algorithm: Workflow.} 
We pause for a moment and relate the results obtained so far with the
statements of Theorem~\ref{t.identification}. Given a probability
distribution $\Prob:\S^n\to[0,1]$ as input, the algorithm of
Theorem~\ref{t.identification} will proceed in three steps:
\begin{enumerate}
\item Determine whether
  $\Prob\in\im\mathbf{f}_{\cH_d,n}$.
\item If yes, determine $\Theta\in\cH_d$ such that
  $\mathbf{f}_{\cH_d,n}(\Theta)=\Prob$.
\item If $\Theta\in\cH_d\setminus\cN_d$ determine whether $\Theta$
  is real non-negative.
\end{enumerate}
From this outer perspective, Lemma~\ref{l.genericfiber} and
Corollary~\ref{c.genericfiber} are key to performing the third step.
We will provide the ingredients for the first two steps in the
subsequent sections \ref{sec.hankel} and \ref{sec.algorithm}. We
create the necessary link to these sections with the main theorem of
this section.

\begin{Thm}
\label{t.hmmdim}
Let $\mathbf{f}_{\cH_d,n}$ be as in Definition \ref{d.hmm} where $n\ge
2d-1$. Then it holds that
\begin{equation}
\dim V_{\cH_d,n}=\begin{cases} 1 & |\S| =1\\
d^2 + (|\S|-1)d & |\S|\ge 2
                                         \end{cases}.
\end{equation}
\end{Thm}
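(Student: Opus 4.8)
The plan is to mirror the proof of Theorem \ref{t.mmmdim} verbatim, the only change being that the positive-dimensional generic fiber there (the space of invertible $S$ with $S\mathbf{1}=\mathbf{1}$) is replaced by the \emph{finite} generic fiber supplied by Lemma \ref{l.genericfiber}. Thus the whole computation reduces to a single application of the fiber-dimension theorem \cite[Th.~11.12]{Harris}, with generic fiber dimension $0$ in place of $d(d-1)$.

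First I would dispose of the trivial case $|\S|=1$ exactly as in Theorem \ref{t.mmmdim}: here $\C^{\S^n}=\C^1$, and since the unique symbol forces $O_a=\Id$ and $M\mathbf{1}=\mathbf{1}$, one has $T_v\mathbf{1}=M^n\mathbf{1}=\mathbf{1}$, so that $\mathbf{f}_{\cH_d,n}(\Theta)=\pi'\mathbf{1}$ sweeps out all of $\C^1$ as $\pi$ ranges over $\C^d$ (recall $\pi$ is unconstrained in $\cH_d$). Hence $\dim V_{\cH_d,n}=1$.

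For $|\S|\ge 2$, I would observe that $\cH_d\cong\C^{d^2+d(|\S|-1)}$ is an affine space, hence irreducible, so its image closure $V_{\cH_d,n}=\overline{\mathbf{f}_{\cH_d,n}(\cH_d)}$ is an irreducible quasi-projective variety and $\mathbf{f}_{\cH_d,n}\colon\cH_d\to V_{\cH_d,n}$ is dominant by construction; this is precisely the setting in which \cite[Th.~11.12]{Harris} applies. The crux is the dimension of the generic fiber. By Lemma \ref{l.genericfiber}, which is available because the hypothesis $n\ge 2d-1$ guarantees generic identifiability, the exceptional set satisfies $\dim\cN_d<\dim\cH_d$ (its complement being nonempty by the rank-$d$ parametrizations of Example \ref{x.fullrankhmm}), and for every $\Theta\in\cH_d\setminus\cN_d$ the fiber $\mathbf{f}_{\cH_d,n}^{-1}(\mathbf{f}_{\cH_d,n}(\Theta))$ has cardinality $d!$. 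The generic fiber is therefore finite, i.e. $0$-dimensional, and plugging $\cH_d$ and $\mathbf{f}_{\cH_d,n}$ into $X$ and $\pi$ in \cite[Th.~11.12]{Harris} gives
\begin{equation*}
\dim V_{\cH_d,n}=\dim\cH_d-0=d^2+d(|\S|-1)=d^2+(|\S|-1)d,
\end{equation*}
as claimed.

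The single subtle point is the passage from ``finitely many preimages over points coming from $\cH_d\setminus\cN_d$'' to ``the generic fiber is $0$-dimensional.'' This is legitimate because the generic fiber realizes the minimal fiber dimension $\dim\cH_d-\dim V_{\cH_d,n}$, so exhibiting even one finite fiber forces this minimum to be $0$; Lemma \ref{l.genericfiber} in fact furnishes a whole dense open set of such fibers. Consequently no real obstacle remains at this stage: all of the substantive work has been pushed into Lemma \ref{l.genericfiber} (the generic finiteness of HMM fibers), and once that is granted the dimension count is a routine repetition of the argument already carried out for $V_{\cM_d,n}$.
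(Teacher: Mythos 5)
Your proposal is correct and takes essentially the same approach as the paper: both invoke Lemma \ref{l.genericfiber} to obtain a generically finite (cardinality $d!$, hence zero-dimensional) fiber over $\cH_d\setminus\cN_d$ and then apply \cite[Th.~11.12]{Harris} exactly as in the proof of Theorem \ref{t.mmmdim}, yielding $\dim V_{\cH_d,n}=\dim\cH_d-0=d^2+(|\S|-1)d$. Your explicit treatment of the $|\S|=1$ case and your remark justifying the passage from finite fibers to generic fiber dimension zero merely spell out details the paper leaves implicit.
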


\medskip

\begin{proof}
  The proof again is an application of \cite[Th.11.12]{Harris}.  Let
  $\Theta=((T_a=O_aM)_{a\in\S},\pi)\in\cH_d\setminus\cN_d$.
  (\ref{eq.genericfiber}) implies that
  \begin{equation}
    \dim\mathbf{f}_{\cH_d,n}^{-1}(\Theta) = 0.
  \end{equation}
  Applying \cite[Th.~11.12]{Harris} in the way of the proof for
  Theorem~\ref{t.mmmdim} yields
  \begin{equation}
    \begin{split}
      \dim V_{\cH_d,n} &=
      \dim\overline{\im\mathbf{f}_{\cH_d,n}}\\ 
      &= \dim\C^{d^2+(|\S|-1)d} - \dim\mathbf{f}_{\cH_d,n}^{-1}(\Theta)\\ 
      &= d^2+(|\S|-1)d - 0.
    \end{split}
  \end{equation}
  \qed
\end{proof}

\paragraph{Binary-Valued HMMs}

In case of a two-letter alphabet $\S$ we find
\begin{equation*}
\dim V_{\cH_d,n} = (|\S|-1)d+d^2 = d+d^2 = d+(|\S|-1)d^2 = \dim V_{\cM_d,n}.
\end{equation*}
Since $V_{\cH_d,n}\subset V_{\cM_d,n}$ and both varieties are
irreducible, $V_{\cH_d,n}$ and $V_{\cM_d,n}$ coincide, which is a
standard conclusion from algebraic geometry~\cite[Prop.~10,
p.~463]{Cox}. Therefore, we obtain the following key insight.

\begin{Cor}
  \label{c.binaryhmm}
  If $|\S|=2$
  \begin{equation}
    V_{\cH_d,n}=V_{\cM_d,n}.
  \end{equation}
  \qed
\end{Cor}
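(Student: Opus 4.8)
The plan is to combine the two dimension computations already in hand with the containment and irreducibility of the two varieties; essentially all the substantive work has been done in the preceding theorems, and this corollary is a short formal consequence. First I would specialize Theorem~\ref{t.mmmdim} and Theorem~\ref{t.hmmdim} to the binary case $|\S|=2$ (for which the standing hypothesis $n\ge 2d-1$ of both theorems is in force). Theorem~\ref{t.mmmdim} gives $\dim V_{\cM_d,n}=(|\S|-1)d^2+d=d^2+d$, while Theorem~\ref{t.hmmdim} gives $\dim V_{\cH_d,n}=d^2+(|\S|-1)d=d^2+d$, so the two varieties have equal dimension precisely when $|\S|=2$.

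Next I would recall the two structural facts established earlier. By Proposition~\ref{p.hmmobservations}(c) we have the inclusion $V_{\cH_d,n}\subset V_{\cM_d,n}$, and both $V_{\cH_d,n}$ and $V_{\cM_d,n}$ are irreducible, being the topological closures of images of complex-valued polynomial maps (as noted in Section~\ref{sec.prelim}, following \cite[Th.~3.14]{Pachter}).

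The conclusion then follows from the standard fact of algebraic geometry \cite[Prop.~10, p.~463]{Cox} that a proper irreducible closed subvariety of an irreducible variety has strictly smaller dimension; equivalently, if $W\subset V$ are irreducible with $\dim W=\dim V$, then $W=V$. Applying this with $W=V_{\cH_d,n}$ and $V=V_{\cM_d,n}$ forces $V_{\cH_d,n}=V_{\cM_d,n}$, which is the assertion.

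There is no genuine obstacle at the level of the corollary itself: the only mild point requiring attention is that the dimension comparison and the irreducibility must both be taken over the same field $\C$, which is why the complex-valued formulations of the models were set up in the first place. The \emph{real} difficulty sits upstream, in the fiber-dimension analysis underlying Theorems~\ref{t.mmmdim} and~\ref{t.hmmdim}—in particular Lemma~\ref{l.imagedim}, which identifies the fibers of $\mathbf{f}_{\cM_d,n}$ with the $d(d-1)$-dimensional group of invertible maps $S$ fixing $\mathbf{1}$, and Lemma~\ref{l.genericfiber}, which shows the generic fibers of $\mathbf{f}_{\cH_d,n}$ are finite. Once those computations are granted, the coincidence of the varieties for $|\S|=2$ is immediate.
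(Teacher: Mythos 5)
Your proposal is correct and follows essentially the same route as the paper: equate the two dimensions via Theorems~\ref{t.mmmdim} and~\ref{t.hmmdim} in the case $|\S|=2$, invoke the containment $V_{\cH_d,n}\subset V_{\cM_d,n}$ from Proposition~\ref{p.hmmobservations}(c) together with irreducibility of both varieties, and conclude equality by the standard fact \cite[Prop.~10, p.~463]{Cox}. Your added remark that the substantive work lies upstream in Lemmas~\ref{l.imagedim} and~\ref{l.genericfiber} is accurate but does not change the argument.
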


\section{Invariants}
\label{sec.hankel}

Computation of invariants for finitary models is made possible by a
{\em Hankel matrix} based characterization of finitary processes,
corollaries of which will also shed light on the relationship $n\ge
2d-1$ in the formulation of Problem~\ref{pro.finite}.

\subsection{The Hankel Matrix}
\label{ssec.hankel}

\begin{Def}
  \label{d.probfunction}
  A string function $p:\S^*\to\C$ such that 
  \begin{equation}
    \label{eq.probfunction}
    \forall v\in\S^*:\;\sum_{a\in\S}p(va)=p(v)
  \end{equation}
  is called a {\em process function}.
\end{Def}

$\sum_ap(va)=p(v)$ implies $\sum_{u\in\S^m}p(vu) = p(v)$ for all
$m\in\N$ which parallels the definition of a process model. By
standard arguments, string functions $p:\S^*\to\C$ are associated with
stochastic processes if and only if
\begin{equation}
  \forall v\in\S^*:\;\sum_{a\in\S}p(va)=p(v), \quad 
  \sum_{a\in\S}p(a)=1\quad\text{and}\quad p(\S^*)\subset[0,1].
\end{equation}
Omitting $\sum_ap(a)=1$, $p(\S^*)\subset[0,1]$ in the definition of
process function is for compatibility with algebraic process models,
see Def.~\ref{d.processmodel}.

\begin{Def}
  \label{d.hankel}
  Let $p:\S^*\to\C$ be a string function. 
  \begin{itemize}
  \item
    \begin{equation}
      \label{eq.hankel}
      \cP_p := [p(vw)_{v,w\in \Sigma^*}]\in\C^{\Sigma^*\times\Sigma^*}
    \end{equation}
    is called the {\em Hankel matrix} of $p$ (also called {\em
      prediction matrix} in case of a process function $p$, see
    e.g.~\cite{Schoenhuth07}). 
  \item We define 
    \begin{equation}
      \rk p := \rk\cP_p
    \end{equation}
    to be the {\em rank} of the string function $p$. 
  \item In case of $\rk p<\infty$ the string function $p$ is said
    to be {\em finitary}.
  \end{itemize}
\end{Def}

\begin{Xmp} 
  \label{x.hankel}
  Let $p:\S^*\to\C$ be a string function over the binary alphabet
  $\S=\{0,1\}$. Using lexicographical order on finite strings, the Hankel
  matrix is
  \begin{equation*}
    \cP_p = 
    \begin{pmatrix} 
      p(\epsilon) & p(0) & p(1)   & p(00)   & p(01)   & p(10)   & p(11)   & \hdots \\
      p(0)  & p(00)  & p(01)  & p(000)  & p(001)  & p(010)  & p(011)  & \hdots \\
      p(1)  & p(10)  & p(11)  & p(100)  & p(101)  & p(110)  & p(111)  & \hdots \\
      p(00) & p(000) & p(001) & p(0000) & p(0001) & p(0010) & p(0011) & \hdots \\
      p(01) & p(010) & p(011) & p(0100) & p(0101) & p(0110) & p(0111) & \hdots \\
      p(10) & p(100) & p(101) & p(1000) & p(1001) & p(1010) & p(1011) & \hdots \\
      p(11) & p(110) & p(111) & p(1100) & p(1101) & p(1110) & p(1111) & \hdots \\
      \vdots& \vdots & \vdots & \vdots  & \vdots  & \vdots  & \vdots  & \ddots
    \end{pmatrix}
  \end{equation*}
  See also \cite{Finesso10} for examples.
\end{Xmp} 

\begin{Xmp}[Rank 1 Hankel matrices: i.i.d.~processes]
Let $(X_t)$ be an i.i.d.~stochastic process taking values in
$\S$. That is, there are $\rho_a\in[0,1],a\in\S$ with $\sum_a\rho_a=1$
such that
\begin{equation}
p_X(a_1...a_n)=\rho_{a_1}\cdot...\cdot\rho_{a_n}
\end{equation}
for all $a_1...a_n\in\S^*$. We observe that $\rk\cP_{p_X} = 1$ in that case.
In fact, $\rk\cP_{p_X} = 1$ is a characterization of i.i.d.~processes.
\end{Xmp}

\begin{Xmp}
Revisiting Example~\ref{x.fullrankhmm} (\/there $\Sigma$ was
$\{a,b\}$) yields that the finite submatrix 
\begin{equation}
  \begin{pmatrix}
    p(\epsilon) & p(a) & \cdots & p(a^{d-1})\\
    p(a)        & p(aa) & \cdots & p(aa^{d-1}=a^d)\\
    \vdots      & \vdots & \ddots & \vdots\\
    p(a^{d-1})   & p(a^{d-1}a=a^d) & \cdots & p(a^{d-1}a^{d-1}=a^{2d-2})
  \end{pmatrix}\in[0,1]^{d\times d}
\end{equation}
of $\cP_{p_X}$, as an invertible matrix, has rank $d$.
\end{Xmp}

For finitary processes one may ask if their rank as process (see
Definition~\ref{d.finitaryrank}) agrees with their rank as string
function.

Generalizing \cite{Faigle07} one can show that this is the case and
therefore $\cM_d$ consists precisely of parameterizations with process
functions of rank $\le d$.

\begin{Thm}
\label{t.oom}
Let $p:\S^*\to\C$ be a process function.
Then the following conditions are equivalent.
\begin{enumerate}
\item[(i)] $p$ is finitary of rank at most $d$.
\item[(ii)] There exist vectors $x,y\in\C^d$ as well as matrices
            $T_a\in\C^{d\times d}$ for all $a\in\S$ such that
\begin{equation}
\label{eq.oom}
\forall a_1...a_n\in\S^*:\quad p(a_1...a_n) = x'T_{a_1}...T_{a_n}y
\quad\text{and}\quad (\sum_{a\in\S}T_a)y=y.
\end{equation}
\item[(iii)] There exists a vector $x\in\C^d$ as well as matrices
            $T_a\in\C^{d\times d}$ for all $a\in\S$ such that
\begin{equation}
\label{eq.oom1}
\forall a_1...a_n\in\S^*:\quad p(a_1...a_n) = x'T_{a_1}...T_{a_n}\mathbf{1}
\quad\text{and}\quad (\sum_{a\in\S}T_a)\mathbf{1}=\mathbf{1}.
\end{equation}
where $\mathbf{1}=(1,...,1)'\in\C^d$ is the vector of all ones.
\end{enumerate}
\end{Thm}

\begin{proof}
  $(ii)\Leftrightarrow (iii)$ (where $(iii)$ trivially implies $(ii)$)
  follows from the observation that, given an invertible linear map
  $S:\C^d\to\C^d$ such that $S\mathbf{1}=y$ yields
  \begin{equation}
    \label{eq.transform}
    x'T_{a_1}...T_{a_n}y = x'SS^{-1}T_{a_n}SS^{-1}...SS^{-1}T_{a_1}SS^{-1}y 
    = \tilde{x}'\tilde{T}_{a_n}...\tilde{T}_{a_1}\mathbf{1}
  \end{equation}
  where $\tilde{T}_{a_i}=S^{-1}T_{a_i}S,\tilde{x}=S'x$.
  $(iii)\Rightarrow(i)$ is because the arguments from
  \cite{Jaeger00,Schoenhuth07} work for arbitrary fields.
  $(i)\Rightarrow(ii)$ follows because the arguments from
  \cite{Jaeger00,Schoenhuth07} do not require $\sum_{v\in\S^n}p(v)=1$,
  which is missing here, for a proof.\qed
\end{proof}

\paragraph{Finite Algebraic Relationships}
\label{ssec.finiteness}

In the following, we write
\begin{equation}
\cP_{p,m,n} := [p(vw)]_{|v|\le m,|w|\le n}\in\C^{\frac{(|\S|^{m+1}-1)}{|\S|-1}\times\frac{(|\S|^{n+1}-1)}{|\S|-1}}.
\end{equation}
for the upper left submatrices of $\cP$ which refer to prefixes and
suffixes of length at most $m$ and $n$. Well-known arguments
(e.g.~\cite[Lemma~2.4]{Schoenhuth07}) show that
\begin{equation}
  \label{eq.dimcheck}
  \rk\cP_p = \rk\cP_{p,d-1,d-1}
\end{equation}
for a process function $p$ of rank $\le d$.  It follows that a process
function of rank $\le d$ is uniquely determined by the values
\begin{equation}
  \label{eq.unique}
  p(v), \quad |v| = 2d-1
\end{equation}
which applies for $d$-state HMPs. Combining this with Lemma
\ref{l.genericfiber} yields that HMPs are generically identifiable
from their string-length $2d-1$ probabilities. 

\begin{Rem}
  \cite{Allman09} demonstrate that for $|\S|>2$, HMPs are generically
  identifiable already from distributions over strings of length
  smaller than $2d-1$.  However, hidden Markov processes are no longer
  uniquely determined by their distributions on strings of length
  smaller than $2d-1$.  We conjecture that we could, with some extra
  work, also employ \cite{Allman09} for our arguments. However, the
  bounds provided by \cite{Allman09} on string length agree with the
  ones in use here for $|\S|=2$ in any case. To date, for binary-valued
  alphabets, $2d-1$ is the lowest bound presented in the literature.
\end{Rem}

\begin{Rem}[Stationarity]
  \label{rem.stationarity}
  Let $p$ represent a stochastic process and let $(\cP_p)_v:\S^*\to\C$ be
  the $v$-row in $\cP_p$ (that is $(\cP_p)_v(w)=p(vw)$)
  resp.~$(\cP_p)^w:\S^*\to\C$ be the $w$-column of $\cP_p$ (that is
  $(\cP_p)^w(v)=p(vw)$). Because $p$ is a process, we have
  \begin{equation}
    \label{eq.columncond}
    \sum_{a\in\S}(\cP_p)^{wa}=(\cP_p)^w
  \end{equation}
  which is a reformulation of the recurring theme \eqref{eq.process}.
  In case that $p$ is a stationary process, \eqref{eq.stationary} translates to
  \begin{equation}
    \label{eq.stathankel}
    \sum_{a\in\S}(\cP_p)_{av}=(\cP_p)_v.
  \end{equation}
  This introduces a ``symmetry'' in $\cP_p$ insofar as
  \eqref{eq.stathankel} is the condition for the rows that is
  analogous to the column condition \eqref{eq.columncond}.
\end{Rem}

To summarize, Theorem~\ref{t.oom} states that the finitary processes
of rank $\le d$ are precisely the ones whose process functions give
rise to Hankel matrices of rank at most $d$.  This translates to the
fact that $\rk p\le d$ if and only all $(d+1)\times(d+1)$-minors of
$\cP_p$ are zero, which yields a polynomial characterization of rank
$\le d$ processes.\par This characterization, however, requires usage
of probabilities for strings of arbitrary length. Since we aim at
obtaining polynomial equations for probabilities of strings of a fixed
length $n$ alone (where $n\ge 2d-1$), we need to collect further
insights. Therefore, immediately note that \eqref{eq.process} reveals
probabilities of strings of length $m<n$ as sums of probabilities of
length-$n$ strings. We will demonstrate in the next section how to
avoid probabilities for strings of length $m>n$.

\subsection{Ideals and Varieties}
\label{sec.maintheorem}

Let
\begin{equation}
  R := \C[p_v\mid v\in\S^*]
\end{equation}
be the polynomial ring with (infinitely many) indeterminates $p_v$. 
Let further
\begin{equation}
  \label{eq.subring}
  R_n := \C[p_v\mid v\in\S^n]
\end{equation}
\eqref{eq.process} reveals that $R_m$ can be regarded as a subring of
$R_n$ for $m<n$, which is crucial in the following.\par We define
\begin{equation}
  \cP_R := [p_{vw}]_{v,w\in \Sigma^*}\in R^{\Sigma^*\times\Sigma^*}
\end{equation}
as a matrix of Hankel type whose entries are indeterminates
$p_{vw}$, which is analogous to $\cP_p$. As for $\cP_p$, we also write
\begin{equation}
  \cP_{R,m,n} := [p_{vw}]_{|v|\le m,|w|\le n}\in R^{\frac{(|\S|^{m+1}-1)}{|\S|-1}\times\frac{(|\S|^{n+1}-1)}{|\S|-1}}.
\end{equation}

Let
\begin{multline}
  \label{eq.idealdef1}
    I_{d+1,n} :=
    \langle\;\;f \mid f \text{ (d+1)-minor of }\cP_{R,\lfloor\frac{n}{2}\rfloor,\lceil\frac{n}{2}\rceil}\;\;\rangle
    + \langle\;\;f \mid f \text{ (d+1)-minor of }\cP_{R,\lceil\frac{n}{2}\rceil,\lfloor\frac{n}{2}\rfloor}\;\;\rangle 
\end{multline}
$I_{d+1,n}$ is the ideal of all $(d+1)$-minors in either
$\cP_{R,\lfloor\frac{n}{2}\rfloor,\lceil\frac{n}{2}\rceil}$ or
$\cP_{R,\lceil\frac{n}{2}\rceil,\lfloor\frac{n}{2}\rfloor}$. Let
\begin{equation}
  \label{eq.idealdef2}
    J_{d,n} := \langle\;\;g\mid\; g\text{ d-minor of }\cP_{R,d-1,d-1}\;\;\rangle
\end{equation}
$J_{d,n}$ is the ideal of all $d$-minors in $\cP_{R,d-1,d-1}$. Due to the
comment following \eqref{eq.subring}, one can view both $I_{d+1,n}$ and $J_{d,n}$
as ideals of $R_n$. 

\begin{Rem}
  \label{rem.prime}
  Elementary insights \cite{Bruns} point out that determinantal ideals
  are prime if matrix entries represent independent
  indeterminates. This is not the case here---as just outlined,
  $p_u=\sum_{w\in\S^{n-m}}p_{uw}$ for $|u|=m<n=|uw|$ reveals $p_u$,
  for $|u|=m<n$, as a sum of indeterminates referring to strings of
  length $n$. Indeed, computations with Bertini \cite{Bertini} confirm
  that $I_{3,4}$, for example, is not prime.
\end{Rem}

Let $\rad I$ be the radical of $I$ and $I:J$ the quotient ideal of $I$
with respect to $J$.

\begin{Thm}
  \label{t.generators}
  Let $n\ge 2d-1$. Then
  \begin{equation*}
    I_{\cM_d,n} = \rad I_{d+1,n} : J_{d,n}.
  \end{equation*}
  For $|\S|=2$, also
  \begin{equation}
    I_{\cH_d,n}=\rad I_{d+1,n} : J_{d,n}.
  \end{equation}
\end{Thm}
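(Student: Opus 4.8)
The plan is to reduce the ideal identity to a single set-theoretic statement about varieties and then to prove that statement using Theorem~\ref{t.oom}. First I would record that the quotient of a radical ideal by an arbitrary ideal is again radical: if $f^k\in\rad I_{d+1,n}:J_{d,n}$ then for every $g\in J_{d,n}$ one has $g^k\in J_{d,n}$ and hence $(fg)^k=f^kg^k\in\rad I_{d+1,n}$, so $fg\in\rad I_{d+1,n}$ and $f\in\rad I_{d+1,n}:J_{d,n}$. Since for a radical ideal $I$ the quotient satisfies $I:J=\bigcap_{Z_i\not\subseteq V(J)}I(Z_i)$ over the irreducible components $Z_i$ of $V(I)$, we get the clean geometric description
\[
V(\rad I_{d+1,n}:J_{d,n}) = \overline{V(I_{d+1,n})\setminus V(J_{d,n})}.
\]
As $I_{\cM_d,n}=I(V_{\cM_d,n})$ by definition and the left-hand ideal is radical, the Nullstellensatz shows that the whole theorem is equivalent to the set-theoretic identity $\overline{V(I_{d+1,n})\setminus V(J_{d,n})}=V_{\cM_d,n}$, which is the content of the set-theoretic Lemma~\ref{l.generators}. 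The $|\S|=2$ assertion then follows immediately from Corollary~\ref{c.binaryhmm}, which gives $V_{\cH_d,n}=V_{\cM_d,n}$ and hence $I_{\cH_d,n}=I_{\cM_d,n}$. Note that all generators in \eqref{eq.idealdef} genuinely lie in $\C[p_v\mid v\in\S^n]$, since the entries of both half-size blocks and of $\cP_{p,d-1,d-1}$ involve only strings of length $\le n$ and are therefore linear forms in the $p_v$, $v\in\S^n$, by \eqref{eq.process}.

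For the set-theoretic identity I would argue the two inclusions separately. The inclusion $V_{\cM_d,n}\subseteq V(I_{d+1,n})$ is the easy direction: every point of $\im\mathbf{f}_{\cM_d,n}$ has the form $(\pi'T_v\mathbf{1})_{v\in\S^n}$, and the purely algebraic computation of Proposition~\ref{p.finitarynecessary}, which extends verbatim to complex parameters in $\cM_d$, shows that every $(d+1)\times(d+1)$ minor of the associated Hankel matrix vanishes, in particular those of the two blocks $\cP_{p,\lfloor n/2\rfloor,\lceil n/2\rceil}$ and $\cP_{p,\lceil n/2\rceil,\lfloor n/2\rfloor}$; since $V(I_{d+1,n})$ is closed this passes to $V_{\cM_d,n}$. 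Combining this with the facts that $V_{\cM_d,n}$ is irreducible (as the closure of the image of the irreducible $\cM_d\cong\C^{|\S|d^2}$) and is not contained in $V(J_{d,n})$ (it contains the rank-$d$ points of Example~\ref{x.fullrankhmm}), we see that $V_{\cM_d,n}$ meets $V(I_{d+1,n})\setminus V(J_{d,n})$ in a dense open subset of itself, whence $V_{\cM_d,n}\subseteq\overline{V(I_{d+1,n})\setminus V(J_{d,n})}$.

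The reverse inclusion is the substantive part. Because $V_{\cM_d,n}$ is closed it suffices to show $V(I_{d+1,n})\setminus V(J_{d,n})\subseteq V_{\cM_d,n}$, that is, any tuple $(p_v)_{v\in\S^n}$ for which both half-size blocks have rank $\le d$ while $\cP_{p,d-1,d-1}$ has rank exactly $d$ comes from a $d$-dimensional finitary parametrization. The plan is to extend such a tuple, via \eqref{eq.process}, to a string function on all words of length $\le n$ and then to a genuine process function $p:\S^*\to\C$ of rank $\le d$, after which Theorem~\ref{t.oom}(iii) supplies matrices $T_a\in\C^{d\times d}$ with $(\sum_aT_a)\mathbf{1}=\mathbf{1}$ and a vector $x$ with $p(v)=x'T_v\mathbf{1}$, exhibiting the tuple as a point of $\im\mathbf{f}_{\cM_d,n}$. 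The existence and uniqueness of the rank-$\le d$ completion is exactly the Hankel-matrix content of \eqref{eq.dimcheck} and \eqref{eq.unique}: once $\cP_{p,d-1,d-1}$ has rank $d$, its row and column spaces are already spanned by words of length $\le d-1$, and the rank-$\le d$ conditions on the two complementary blocks force each one-step shift to preserve the rank, so the Hankel matrix propagates to a unique rank-$d$ completion.

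The hard part will be precisely this completion step: verifying that vanishing of the $(d+1)$-minors on the two \emph{finite} half-size blocks, rather than on the full infinite Hankel matrix, already guarantees that the shift-closure does not raise the rank, so that the unique rank-$d$ extension exists. This is where the hypothesis $n\ge 2d-1$ enters essentially, forcing $\lceil n/2\rceil\ge d$ and $\lfloor n/2\rfloor\ge d-1$ so that the two window orientations in \eqref{eq.idealdef} jointly cover every shift relation needed; it is also where the saturation $:J_{d,n}$ does its work, discarding the degenerate locus $\rk\cP_{p,d-1,d-1}<d$ on which the completion argument breaks down and which accounts for the remaining lower-rank components of $V(I_{d+1,n})$.
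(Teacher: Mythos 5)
Your reduction of the ideal identity to a set-theoretic one is sound and mirrors the paper's closing step: the quotient of a radical ideal is radical, $V\bigl((\rad I_{d+1,n}):J_{d,n}\bigr)=\overline{V(I_{d+1,n})\setminus V(J_{d,n})}$ over $\C$ (the paper cites \cite[p.~192, th.~7]{Cox} for exactly this), and the Nullstellensatz then makes the theorem equivalent to $\overline{V(I_{d+1,n})\setminus V(J_{d,n})}=V_{\cM_d,n}$; the case $|\S|=2$ is indeed immediate from Corollary~\ref{c.binaryhmm}. Your forward inclusion is also complete, and it takes a genuinely different route from the paper: you combine $V_{\cM_d,n}\subseteq V(I_{d+1,n})$ (Proposition~\ref{p.finitarynecessary} extended to $\C$), irreducibility of $V_{\cM_d,n}$, and the rank-$d$ point of Example~\ref{x.fullrankhmm} to get density of $V_{\cM_d,n}\setminus V(J_{d,n})$ in $V_{\cM_d,n}$, whereas the paper stratifies $\im\mathbf{f}_{\cM_d,n}$ by the sets $\im\mathbf{f}_{\cM_e,n}\setminus\im\mathbf{f}_{\cM_{e-1},n}$ and needs the dimension formula of Theorem~\ref{t.mmmdim} to single out the top stratum. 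Your version is leaner on that side.

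The genuine gap is the reverse inclusion $V(I_{d+1,n})\setminus V(J_{d,n})\subseteq\im\mathbf{f}_{\cM_d,n}$, which you explicitly leave open as ``the hard part'': this is precisely the implication (ii)$\Rightarrow$(i) of Lemma~\ref{l.generators}, and it is where essentially all of the paper's work for this theorem is located, so a proposal that defers it has not proved the theorem. Moreover, the tools you suggest for it cannot close the hole as stated: \eqref{eq.dimcheck} and \eqref{eq.unique} are assertions about process functions already defined on all of $\S^*$ (equivalently, about the full infinite Hankel matrix), so invoking them to produce a rank-$\le d$ extension of a finite tuple is circular --- the existence of such an extension is the very thing to be shown. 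The paper instead gives a concrete construction: choose an invertible submatrix $V=[p(v_iw_j)]_{1\le i,j\le d}$ of $\cP_{p,d-1,d-1}$, set $x'=(p(w_1),\dots,p(w_d))$, $y=V^{-1}(p(v_1),\dots,p(v_d))'$ and $T_a=V^{-1}[p(v_iaw_j)]_{1\le i,j\le d}$, then prove by two inductions (Sublemmata~\ref{sl.generators.1} and~\ref{sl.generators.2}, whose bookkeeping of prefix/suffix lengths $\lceil n/2\rceil$, $\lfloor n/2\rfloor$ is exactly where $n\ge 2d-1$ enters) that $p(u)=x'T_uy$ for all $|u|\le n$ and that $(\sum_{a\in\S}T_a)y=y$; Theorem~\ref{t.oom}(ii)$\Rightarrow$(iii) then converts this data into a point of $\im\mathbf{f}_{\cM_d,n}$. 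Until you supply this construction, or an equivalent argument that the rank conditions on the two finite blocks propagate under shifts, your proposal establishes only one of the two inclusions.
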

\medskip

Theorem~\ref{t.generators} provides an ideal-theoretic
characterization of the variety associated with the finitary model
$\mathbf{f}_{\cM_d,n}$. If $|\S|=2$, this also applies for the hidden
Markov model $\mathbf{f}_{\cH_d,n}$, due to
Corollary~\ref{c.binaryhmm}.

\begin{Rem}
  As pointed out in Remark~\ref{rem.prime}, the quotient operation is
  necessary.  However, it remains an open problem whether the radical
  operation is.  Macaulay \cite{Macaulay} computations reveal that it
  is not for $d=2,n=3$. Macaulay and Bertini computations also confirm
  Theorem~\ref{t.generators} in terms of dimension computations.
\end{Rem}

The proof of Theorem~\ref{t.generators} is based on a set-theoretic
lemma which makes use of the insights assembled in the earlier
chapters. For the following, we recall the notation 
\begin{equation}
  T_v = T_{a_1}...T_{a_n}\quad\text{for }v=a_1...a_n
\end{equation}
see \eqref{eq.tva1an}.

\begin{Lem}
\label{l.generators}
Let $n\ge 2d-1$ and $(p(v))_{v\in\S^n}\in\C^{\S^n}$.  The
following statements are equivalent:
\begin{enumerate}
\item[(i)]
\begin{equation}
(p(v))_{v\in\S^n}\in\im\mathbf{f}_{\cM_d,n}\setminus\im\mathbf{f}_{\cM_{d-1},n}
\end{equation}
\item[(ii)]
\begin{equation}
\label{eq.generators}
\rk\cP_{p,d-1,d-1} = \rk \cP_{p,\lfloor\frac{n}{2}\rfloor, \lceil\frac{n}{2}\rceil } = 
\rk\cP_{p,\lceil\frac{n}{2}\rceil, \lfloor\frac{n}{2}\rfloor } = d
\end{equation}
\end{enumerate}
In case of \eqref{eq.generators}, one can
choose parameters for $(p(v))_{v\in\S^n}$ by determining an invertible
submatrix 
\begin{equation}
  \label{eq.V}
  V=[p(v_iw_j)]_{1\le i,j\le d}\in\C^{d\times d} 
\end{equation}
from
$\cP_{p,d-1,d-1}$ and setting
\begin{align}
  \label{eq.x}
  x' & := (p(w_1),...,p(w_{d}))\\
  \label{eq.y}
  y & := V^{-1}\begin{pmatrix}p(v_1)\\ \vdots\\ p(v_{d})\end{pmatrix}\\
  \label{eq.Ta}
  T_a & := V^{-1}W_a := V^{-1}[p(v_iaw_j)]_{1\le i,j\le d}
\end{align}
which yields that $p(v) = x'T_vy$ so that we obtain
\begin{equation}
  p(v) = \pi\tilde{T_v}\mathbf{1}
\end{equation}
by further application of Theorem \ref{t.oom}.  Note that
probabilities in $W_a$ may refer to strings $v_iaw_j$ of length up to
$2d-1$. This explains the necessity of the assumption $n\ge 2d-1$.  
\end{Lem}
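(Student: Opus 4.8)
The plan is to prove the two implications separately, using Theorem~\ref{t.oom} and the rank-stabilization identity (\ref{eq.dimcheck}) as the main engines, and to reserve the explicit realization (\ref{eq.x})--(\ref{eq.Ta}) for the harder direction.

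First I would treat $(i)\Rightarrow(ii)$, which is the soft direction. Given a parametrization $\Theta=((T_a)_{a\in\S},\pi)\in\cM_d$ realizing the data, I form the full process function $p(v)=\pi'T_v\mathbf{1}$; by Theorem~\ref{t.oom} it is finitary of rank $\le d$, so (\ref{eq.dimcheck}) gives $\rk\cP_{p,d-1,d-1}=\rk\cP_p\le d$. I claim the rank is exactly $d$: were $\rk\cP_p=d'<d$, Theorem~\ref{t.oom} would furnish a $d'$-dimensional parametrization of the same $p$, placing the length-$n$ values in $\im\mathbf{f}_{\cM_{d'},n}\subset\im\mathbf{f}_{\cM_{d-1},n}$ by Proposition~\ref{p.finitaryobservations}, contradicting $(i)$. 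Finally, since $n\ge 2d-1$ forces $\lfloor\frac{n}{2}\rfloor,\lceil\frac{n}{2}\rceil\ge d-1$, the matrix $\cP_{p,d-1,d-1}$ sits as a submatrix inside both rectangular Hankel matrices, which are themselves finite submatrices of $\cP_p$ (all their entries refer to strings of length $\le n$); squeezing their ranks between $d$ and $\rk\cP_p=d$ yields (\ref{eq.generators}).

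For $(ii)\Rightarrow(i)$ I would select an invertible $V=[p(v_iw_j)]_{1\le i,j\le d}$ from $\cP_{p,d-1,d-1}$ and define $x,y,T_a$ as in (\ref{eq.x})--(\ref{eq.Ta}). The central claim is reproduction: $p(v)=x'T_{v_1}\cdots T_{v_n}y$ for all $|v|\le n$. I would establish this by interpreting $x'T_{a_1}\cdots T_{a_k}$ as the Hankel row indexed by the prefix $a_1\cdots a_k$, expressed in the coordinates provided by $V$, and $T_{a_{k+1}}\cdots T_{a_n}y$ as the corresponding Hankel column for the suffix, then splitting $v$ at $k=\lfloor\frac{n}{2}\rfloor$ and contracting the two pieces through $V^{-1}$. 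The hard part will be exactly here: these forward and backward recursions are valid only as long as each intermediate Hankel row stays in the span of the basis rows $\{v_i\}$ and each intermediate column in the span of the basis columns $\{w_j\}$. This span property is precisely what the two rectangular rank conditions encode, and their asymmetric sizes $\lfloor\frac{n}{2}\rfloor\times\lceil\frac{n}{2}\rceil$ and $\lceil\frac{n}{2}\rceil\times\lfloor\frac{n}{2}\rfloor$ are calibrated so that prefixes up to length $\lfloor\frac{n}{2}\rfloor$ and suffixes up to length $\lceil\frac{n}{2}\rceil$ are simultaneously controlled while the final contraction over a column of length $\lceil\frac{n}{2}\rceil$ still closes. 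Keeping all intermediate string lengths within the budget—noting that $W_a$ already touches strings of length up to $2d-1$, which is where the hypothesis $n\ge 2d-1$ enters—is the delicate bookkeeping I expect to be the main obstacle.

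Once reproduction is in hand, the rest is routine. Theorem~\ref{t.oom} converts the $x,y$-realization into the $\mathbf{1}$-form, exhibiting a point of $\cM_d$ whose image under $\mathbf{f}_{\cM_d,n}$ is $(p(v))_{v\in\S^n}$, so the data lies in $\im\mathbf{f}_{\cM_d,n}$. For the complementary part, reproduction on strings of length $\le 2d-2$ shows the realized process function has $\cP_{\cdot,d-1,d-1}$ of rank $d$, hence rank exactly $d$ by (\ref{eq.dimcheck}); if the same length-$n$ data were also realized in dimension $d-1$, then uniqueness (\ref{eq.unique}) applied on strings of length $2d-1\le n$ would identify the two process functions and force rank $\le d-1$, a contradiction. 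Thus the data avoids $\im\mathbf{f}_{\cM_{d-1},n}$, completing the equivalence.
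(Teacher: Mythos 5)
Your proposal follows essentially the same route as the paper's proof. The $(i)\Rightarrow(ii)$ direction (Theorem~\ref{t.oom} plus \eqref{eq.dimcheck}, exactness of the rank via Proposition~\ref{p.finitaryobservations}, then sandwiching the two rectangular matrices between $\cP_{p,d-1,d-1}$ and $\cP_p$) is exactly the paper's argument with its implicit steps made explicit. For $(ii)\Rightarrow(i)$ you set up the same construction \eqref{eq.V}--\eqref{eq.Ta} and the same two-sided recursion that the paper formalizes in Sublemmata~\ref{sl.generators.1} and~\ref{sl.generators.2}: prefixes controlled through the rank condition on $\cP_{p,\lfloor\frac{n}{2}\rfloor,\lceil\frac{n}{2}\rceil}$, suffixes through the one on $\cP_{p,\lceil\frac{n}{2}\rceil,\lfloor\frac{n}{2}\rfloor}$, and a split of the string at roughly $n/2$. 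Your column-span formulation of the suffix half is just the transpose of the paper's row-based one, and your uniqueness-based exclusion of $\im\mathbf{f}_{\cM_{d-1},n}$ via \eqref{eq.unique} is a valid, if more roundabout, substitute for the paper's one-line argument that a $(d-1)$-dimensional realization would force $\rk\cP_{p,d-1,d-1}\le d-1$.

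There is, however, one concrete omission: you never establish the eigenvector identity $(\sum_{a\in\S}T_a)y=y$, which is the second requirement in condition $(ii)$ of Theorem~\ref{t.oom} and which the paper proves separately as \eqref{eq.oomrepr2}. This identity is what makes the appeal to Theorem~\ref{t.oom} legitimate: the conversion to the $\mathbf{1}$-form conjugates by an invertible $S$ with $S\mathbf{1}=y$, and the transformed matrices satisfy $(\sum_aS^{-1}T_aS)\mathbf{1}=S^{-1}(\sum_aT_a)y$, so they have unit row sums---i.e., land in $\cM_d$ at all---precisely when $(\sum_aT_a)y=y$; it is also what turns $v\mapsto x'T_vy$ into a process function on all of $\S^*$, whereas your reproduction claim stops at $|v|\le n$. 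The identity is not visibly formal (note $\sum_{a}p(v_iaw_j)\neq p(v_iw_j)$ in general, since the sum runs over a middle letter rather than a terminal one), and the paper devotes an explicit computation to it: by Sublemma~\ref{sl.generators.2}, $(p(v_iw_1),\ldots,p(v_iw_d))T_ay=p(v_ia)$, summing over $a$ gives $p(v_i)=(p(v_iw_1),\ldots,p(v_iw_d))y$, and the rows of the invertible $V$ span $\C^{1\times d}$. Your own ingredients do suffice to fill the hole---from reproduction, $V=[x'T_{v_i}T_{w_j}y]_{i,j}$ being invertible forces the vectors $x'T_{v_i}$ to span $\C^{1\times d}$, and marginalization gives $x'T_v\bigl((\sum_aT_a)y-y\bigr)=0$ for all $|v|\le n-1$---but the step must be stated and proved; as written, "the rest is routine" skips a condition without which the proof does not go through. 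Note also that your uniqueness-based exclusion argument quietly assumes the constructed realization is a process function (so that \eqref{eq.unique} applies), which rests on this same identity, whereas the paper's direct minor argument has no such dependence.
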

\bigskip

$n\ge 2d-1$ implies that $d-1<\lceil\frac{n}{2}\rceil$. Writing
$A\subsetneq B$ for a submatrix $A$ of a matrix $B$ which is strictly
smaller than $B$ shows that
\begin{equation}
\label{eq.pd1d1}
\cP_{p,d-1,d-1}\;\subsetneq\;\cP_{p,\lfloor\frac{n}{2}\rfloor, \lceil\frac{n}{2}\rceil }
\quad\text{and}\quad
\cP_{p,d-1,d-1}\;\subsetneq\;\cP_{p,\lceil\frac{n}{2}\rceil, \lfloor\frac{n}{2}\rfloor }
\end{equation}

\begin{Xmp}
\label{x.hankel42}
Let $n=3,d=2$ and $\S=\{0,1\}$. Hence $\lceil\frac{n}{2}\rceil = 2$
and $\lfloor\frac{n}{2}\rfloor = 1$ such that we have
\begin{equation*}
\cP_{p,\lceil\frac{n}{2}\rceil,\lfloor\frac{n}{2}\rfloor} = \cP_{p,2,1} =
  \begin{pmatrix} 
      p(\epsilon) & p(0) & p(1)   \\
       p(0)  & p(00)  & p(10)  \\
       p(1)  & p(01)  & p(11)  \\
       p(00) & p(000) & p(100) \\
       p(01) & p(001) & p(101) \\
       p(10) & p(010) & p(110) \\
       p(11) & p(011) & p(111) \\
  \end{pmatrix}\in\C^{7\times 3}
\end{equation*}
\begin{multline*}
\cP_{p,\lfloor\frac{n}{2}\rfloor,\lceil\frac{n}{2}\rceil} = \cP_{p,1,2} =\\
  \begin{pmatrix} 
       p(\epsilon) & p(0) & p(1)   & p(00)   & p(01)   & p(10)   & p(11)   \\
       p(0)  & p(00)  & p(10)  & p(000)  & p(010)  & p(100)  & p(110)  \\
       p(1)  & p(01)  & p(11)  & p(001)  & p(011)  & p(101)  & p(111)  \\
  \end{pmatrix}\in\C^{3\times 7}
\end{multline*}
and
\begin{equation*}
\cP_{p,d-1,d-1} = \cP_{p,1,1} =
  \begin{pmatrix} 
      p(\epsilon) & p(0) & p(1)   \\
       p(0)  & p(00)  & p(10)  \\
       p(1)  & p(01)  & p(11)  \\
  \end{pmatrix}\in\C^{3\times 3}.
\end{equation*}
We recall the relationship $p(v)=\sum_{w\in\S^{3-|v|}}p(vw)$
(\ref{eq.probfunction}). For example,
\begin{eqnarray*}
p(00) & = & p(000) + p(001)\\
p(1) & = & p(100) + p(101) + p(110) + p(111)\\
p(\epsilon) & = & p(000) + p(001) + ... + p(110) + p(111)
\end{eqnarray*}
which yields expressions in strings of length $n=3$ only. 
As $\cP_{p,d-1,d-1}$ is a submatrix of both
$\cP_{p,\lfloor\frac{n}{2}\rfloor,\lceil\frac{n}{2}\rceil }$ and 
$\cP_{p,\lceil\frac{n}{2}\rceil,\lfloor\frac{n}{2}\rfloor}$
we can decompose (\ref{eq.generators}) into
\begin{eqnarray}
\rk\cP_{p,1,1} & \ge & 2\label{eq.conda}\\
\rk \cP_{p,1,2} & \le & 2\label{eq.condb}\\
\rk\cP_{p,2,1} & \le & 2\label{eq.condc}.
\end{eqnarray}
\end{Xmp}
\medskip

{\em Proof of Lemma~\ref{l.generators}.}
(i) $\Rightarrow$ (ii): Let $(p(v))_{v\in\S^n}$ be in the image of
$\mathbf{f}_{\cM_d,n}$, but not in the image of $\mathbf{f}_{\cM_{d-1},n}$.
Combining Theorem~\ref{t.oom} with \eqref{eq.dimcheck} 
reveals that
\begin{equation}
  d = \rk p = \stackrel{{\rm Th.\ref{t.oom}}}{=} \rk \cP_p
  \stackrel{\eqref{eq.dimcheck}}{=} \rk\cP_{p,d-1,d-1} 
\end{equation}
where the second equation is just the definition of the rank of a
string function. Since
$\rk\cP_{p,d-1,d-1}\le\rk\cP_{p,\lfloor\frac{n}{2}\rfloor,\lceil\frac{n}{2}\rceil},
\rk\cP_{p,\lceil\frac{n}{2}\rceil,\lfloor\frac{n}{2}\rfloor}\le\rk\cP_p$
(see (\ref{eq.pd1d1})), we obtain the claim.\\

(ii) $\Rightarrow$ (i): Let $P:=(p(u))_{u\in\S^n}\in\C^{\S^n}$ such
that (\ref{eq.generators}) applies. By Theorem~\ref{t.oom},
$P\in\mathbf{f}_{\cM_{d-1},n}$ would imply $\rk\cP_{p,d-1,d-1}\le
d-1$, a contradiction! In order to show that
$P\in\im\mathbf{f}_{\cM_d,n}$ we will demonstrate that determining
$V,x,y,(T_a)_{a\in\S}$ according to
\eqref{eq.V},\eqref{eq.x},\eqref{eq.y},\eqref{eq.Ta} yields
\begin{eqnarray}
  p(u) & = & x'T_{u}y\quad\text{for all }u\in\S^*\label{eq.oomrepr1}\\
  (\sum_{a\in\S}T_a)y & = & y\label{eq.oomrepr2}.
\end{eqnarray}
Applying $(ii)\Rightarrow(iii)$ from Theorem~\ref{t.oom} to $x,y$
and $T_a,a\in\S$ then proves the claim.\\

The proof concludes by means of the following two elementary
sublemmata \ref{sl.generators.1}, \ref{sl.generators.2}.

\begin{Lem} 
  \label{sl.generators.1}
  Let $v=a_1...a_m\in\S^*$ such that
  $|v|=m\le\lceil\frac{n}{2}\rceil$. Then
  \begin{equation}
    \label{eq.generators.1}
    x'T_v = (p(vw_1),...,p(vw_d)).
  \end{equation}
\end{Lem}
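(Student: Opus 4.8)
The plan is to prove the identity \eqref{eq.generators.1} by induction on the length $m=|v|$, exploiting the multiplicativity $T_{va}=T_vT_a$ and the explicit formulas \eqref{eq.x} and \eqref{eq.Ta}. Abbreviate the target row vector by $r_v:=(p(vw_1),\dots,p(vw_d))$, so the claim is $x'T_v=r_v$. For the base case $v=\epsilon$ one has $T_\epsilon=\mathrm{Id}$ and, by \eqref{eq.x}, $x'=(p(w_1),\dots,p(w_d))=r_\epsilon$, which is immediate. For the inductive step, assume $x'T_v=r_v$ for some $v$ with $m=|v|<\lceil\frac n2\rceil$ and append a letter $a\in\S$. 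Substituting $T_a=V^{-1}W_a$ with $W_a=[p(v_iaw_j)]_{ij}$ from \eqref{eq.Ta}, the $j$-th coordinate of $x'T_{va}=r_vV^{-1}W_a$ is $r_vV^{-1}(p(v_1aw_j),\dots,p(v_daw_j))'$. Everything therefore reduces to the single scalar identity
\begin{equation}
  p(v\,a\,w_j)=r_vV^{-1}(p(v_1aw_j),\dots,p(v_daw_j))',\qquad 1\le j\le d,\tag{$\star$}
\end{equation}
since once $(\star)$ holds the $j$-th coordinate of $x'T_{va}$ equals $p((va)w_j)$, i.e.\ $x'T_{va}=r_{va}$, which closes the induction.

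To establish $(\star)$ I would read it off as a vanishing Schur complement. Consider the $(d+1)\times(d+1)$ submatrix $B$ of the Hankel matrix $\cP_p$ whose rows are indexed by the prefixes $v_1,\dots,v_d,v$ and whose columns are indexed by the suffixes $w_1,\dots,w_d,aw_j$. Its top-left $d\times d$ block is precisely the invertible matrix $V$ of \eqref{eq.V}, its bottom row restricted to the first $d$ columns is $r_v$, and its rightmost column restricted to the first $d$ rows is $(p(v_iaw_j))_i$. If $\det B=0$, then the Schur-complement expansion $\det B=\det V\cdot\big(p(vaw_j)-r_vV^{-1}(p(v_iaw_j))_i\big)$ together with $\det V\ne 0$ yields $(\star)$ at once. (If $v$ coincides with some $v_i$, or $aw_j$ with some $w_k$, then $B$ has a repeated row or column, $(\star)$ is trivial, and these degenerate cases are dispatched directly.)

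The vanishing $\det B=0$ is where the rank condition \eqref{eq.generators} is used, and verifying that $B$ actually lies inside one of its finite Hankel matrices is the only genuinely delicate step — it is exactly here that the hypothesis $n\ge 2d-1$ is spent. The prefixes indexing $B$ have length at most $\max(d-1,m)\le\lfloor\frac n2\rfloor$, because $m\le\lceil\frac n2\rceil-1\le\lfloor\frac n2\rfloor$ and $d-1\le\lfloor\frac n2\rfloor$; the suffixes have length at most $\max(d-1,\,1+(d-1))=d\le\lceil\frac n2\rceil$, where both $d-1\le\lfloor\frac n2\rfloor$ and $d\le\lceil\frac n2\rceil$ follow from $n\ge 2d-1$ (cf.\ \eqref{eq.pd1d1}). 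Hence $B$ is a $(d+1)\times(d+1)$ submatrix of $\cP_{p,\lfloor\frac n2\rfloor,\lceil\frac n2\rceil}$, whose rank equals $d$ by \eqref{eq.generators}, so every such minor—and in particular $\det B$—vanishes. This bookkeeping is the crux of the argument: it converts the finitely many rank equalities of \eqref{eq.generators} into the column-by-column propagation that the induction requires, and it is precisely what forces $2d-1$ to be the relevant string length.
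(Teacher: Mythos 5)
Your proof is correct, and it shares the paper's inductive skeleton --- induction on $|v|$ with base case $v=\epsilon$, reduction to the one-letter propagation step $r_vT_a=r_{va}$, and the same length bookkeeping that locates exactly where $n\ge 2d-1$ and the rank hypothesis \eqref{eq.generators} are spent --- but the propagation step itself is justified by a different device. The paper argues via row spans: since $\rk\cP_{p,\lfloor\frac{n}{2}\rfloor,\lceil\frac{n}{2}\rceil}=d=\rk V$, the row $(\cP_p)_v$ is a linear combination $\sum_i\alpha_i(\cP_p)_{v_i}$ of the rows indexed by the $v_i$, and these coefficients are pushed through $T_a=V^{-1}W_a$ using that the $i$-th row of $V$ times $V^{-1}$ is $e_i'$. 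You instead prove the coordinatewise identity $(\star)$ from a vanishing bordered minor: the $(d+1)\times(d+1)$ matrix $B$ sits inside the rank-$d$ matrix $\cP_{p,\lfloor\frac{n}{2}\rfloor,\lceil\frac{n}{2}\rceil}$ (or has a repeated row or column, a degenerate case you dispatch separately), so $\det B=0$, and the Schur factorization $\det B=\det V\cdot\bigl(p(vaw_j)-r_vV^{-1}c_j\bigr)$ with $c_j=(p(v_iaw_j))_{1\le i\le d}$ and $\det V\ne 0$ yields $(\star)$. The two mechanisms are equivalent --- your coefficient vector $r_vV^{-1}$ is exactly the paper's $(\alpha_1,\dots,\alpha_d)$, since $r_v=\alpha'V$ --- so the difference is one of packaging: the paper's span argument handles all $d$ coordinates at once and is a touch shorter, while your determinantal version invokes only the vanishing of $(d+1)$-minors, i.e.\ precisely the generators of $I_{d+1,n}$ in \eqref{eq.idealdef}, which makes the lemma's connection to the determinantal ideal of theorem~\ref{t.generators} visible and keeps the argument purely equational, with the coefficients produced explicitly rather than obtained from a span assertion.
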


{\em Proof of Lemma~\ref{sl.generators.1}.}
By induction on $|v|$, we obtain a proof by showing
\begin{equation}
  (p(vw_1),...,p(vw_d)) T_a = (p(vaw_1),...,p(vaw_d))
\end{equation}
for all $v\in\S^*,a\in\S$ with $|v|<\frac{n}{2}$. Therefore, $|w_j|\le
d-1<\frac{n}{2}$ implies $|aw_j|\le \lceil\frac{n}{2}\rceil$. Hence
both $|va|,|aw_j|\le\lceil\frac{n}{2}\rceil$. Furthermore, $|v|<n/2$
implies $|v|\le\lfloor\frac{n}{2}\rfloor$ and $\rk\cP_{p,d-1,d-1} =
\cP_{p,\lfloor\frac{n}{2}\rfloor, \lceil\frac{n}{2}\rceil }$ from
(\ref{eq.generators}) implies that the $v$-row $(\cP_p)_v$ in
$\cP_{p,\lfloor\frac{n}{2}\rfloor,\lceil\frac{n}{2}\rceil }$ is
contained in the span of the rows $(\cP_p)_{v_i}$, by choice of the
$v_i$ (\ref{eq.V}). Accordingly, we determine $\alpha_i,i=1,...,d$
such that $(\cP_p)_v = \sum_{i=1}^{d}\alpha_i(\cP_p)_{v_i}$ which, by
definition of
$\cP_{p,\lfloor\frac{n}{2}\rfloor,\lceil\frac{n}{2}\rceil}$, yields
$p(vw) = \sum_{i=1}^d\alpha_ip(v_iw)$ for all
$w,|w|\le\lceil\frac{n}{2}\rceil$.  As $|aw_j|\le\frac{n}{2}$ for all
$j=1,...,d$, we obtain in particular
\begin{equation}
  \label{eq.insight}
  (p(vaw_1),...,p(vaw_d)) = \sum_{i=1}^d\alpha_i(p(v_iaw_1),...,p(v_iaw_d)).
\end{equation}
This is the key insight. We finally compute
\begin{equation}
  \begin{split}
    (&p(vw_1),...,p(vw_d))T_a = \sum_{i=1}^d\alpha_i(p(v_iw_1),...,p(v_iw_d))T_a\\
    &=\sum_{i=1}^d\alpha_i(p(v_iw_1),...,p(v_iw_d))V^{-1}W_a = \sum_{i=1}^d\alpha_ie_i'W_a\\
    &=\sum_{i=1}^d\alpha_i(p(v_iaw_1),...,p(v_iaw_d)) \stackrel{(\ref{eq.insight})}{=}
    (p(vaw_1),...,p(vaw_d)).
  \end{split}
\end{equation}
\qed\\
  
\begin{Lem}
  \label{sl.generators.2}
  For all $v,w\in\S^*$ such that
  $|v|\le\lceil\frac{n}{2}\rceil,|w|\le\lfloor\frac{n}{2}\rfloor$ 
  (which implies $|vw|\le n$):
  \begin{equation}
    (p(vw_1),...,p(vw_d))T_wy = p(vw).
  \end{equation}
\end{Lem}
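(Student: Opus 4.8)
The plan is to establish a ``backward'' counterpart to Sublemma~\ref{sl.generators.1}. Where that sublemma propagated the left factor $x'T_v$ along the rows of $\cP_p$, here I would propagate the right factor $T_wy$ along the columns. Writing $c(w):=(p(v_1w),\ldots,p(v_dw))'$ for the suffix-$w$ column of $\cP_p$ restricted to the rows $v_1,\ldots,v_d$, the key step is the identity
\begin{equation*}
  T_wy=V^{-1}c(w)\qquad\text{for all }w\text{ with }|w|\le\lfloor\tfrac n2\rfloor,
\end{equation*}
which I would prove by induction on $|w|$. The base case $w=\epsilon$ is immediate from \eqref{eq.y}, since $T_\epsilon y=y=V^{-1}(p(v_1),\ldots,p(v_d))'=V^{-1}c(\epsilon)$.

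For the inductive step I would prepend a letter $a$, use $VT_a=W_a$ (read off from \eqref{eq.Ta}) together with the induction hypothesis to get
\begin{equation*}
  VT_{aw}y=W_a(T_wy)=W_aV^{-1}c(w),
\end{equation*}
and then show $W_aV^{-1}c(w)=c(aw)$. Putting $\beta:=V^{-1}c(w)$, the relation $V\beta=c(w)$ says that $\beta$ expresses the suffix column $w$ through the columns $w_1,\ldots,w_d$ over the rows $v_1,\ldots,v_d$; what I need is that the \emph{same} $\beta$ still works after the prefix is extended from $v_i$ to $v_ia$, i.e.\ $\sum_j p(v_iaw_j)\beta_j=p(v_iaw)$, which is exactly $W_a\beta=c(aw)$.

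This persistence-under-extension is the step I expect to be the main obstacle, and it is where the rank condition \eqref{eq.generators} and the hypothesis $n\ge 2d-1$ enter. Since $\beta$ is pinned down only on the rows $v_1,\ldots,v_d$, I would invoke a genuine column dependency of the truncation $\cP_{p,\lceil n/2\rceil,\lfloor n/2\rfloor}$: by \eqref{eq.generators} it has rank $d$ and contains the invertible $V$ on the columns $w_1,\ldots,w_d$, so those columns span its column space and $p(uw)=\sum_j\beta_j p(uw_j)$ holds for \emph{every} prefix $u$ with $|u|\le\lceil n/2\rceil$. Applying this at $u=v_ia$ is legitimate precisely because $|v_ia|\le d\le\lceil n/2\rceil$, the inequality $d\le\lceil n/2\rceil$ being equivalent to $n\ge 2d-1$ (equivalently $d-1<\lceil n/2\rceil$, as noted after the lemma). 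Tracking these length bounds, and choosing this truncation rather than its transpose $\cP_{p,\lfloor n/2\rfloor,\lceil n/2\rceil}$, is the delicate part.

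Finally I would assemble the claim. For $|v|\le\lceil n/2\rceil$ the row $(p(vw_1),\ldots,p(vw_d))$ yields coefficients $\gamma':=(p(vw_1),\ldots,p(vw_d))V^{-1}$, so that $\gamma'V=(p(vw_1),\ldots,p(vw_d))$, and since the rows $v_1,\ldots,v_d$ span the row space of the same rank-$d$ truncation $\cP_{p,\lceil n/2\rceil,\lfloor n/2\rfloor}$, one has $p(vw')=\sum_i\gamma_i p(v_iw')$ for all $|w'|\le\lfloor n/2\rfloor$. Combining this with the backward identity gives
\begin{equation*}
  (p(vw_1),\ldots,p(vw_d))\,T_wy=\gamma'c(w)=\sum_i\gamma_i p(v_iw)=p(vw),
\end{equation*}
the last equality being the row relation evaluated at the suffix $w$, which is permitted since $|w|\le\lfloor n/2\rfloor$. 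This is the asserted identity.
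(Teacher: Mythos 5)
Your proof is correct, and the length bookkeeping you flag as delicate is exactly what matters: the induction must run in $\cP_{p,\lceil\frac{n}{2}\rceil,\lfloor\frac{n}{2}\rfloor}$ (long prefixes, short suffixes), and the prefix extension is legitimate because $|v_ia|\le d\le\lceil\frac{n}{2}\rceil$, i.e.\ because $n\ge 2d-1$. Your route, however, is organized differently from the paper's. The paper inducts on $|w|$ directly on the two-variable statement, carried for \emph{all} $v$ with $|v|\le\lceil\frac{n}{2}\rceil$: in each inductive step it first reduces to $v=v_i$ by a row-span argument, computes
\begin{equation*}
  (p(v_iw_1),\ldots,p(v_iw_d))T_{aw}y=e_i'W_aT_wy=(p(v_iaw_1),\ldots,p(v_iaw_d))T_wy,
\end{equation*}
and then applies the induction hypothesis at the extended prefix $v=v_ia$ --- which is permitted only because the hypothesis is quantified over all admissible $v$, not just the $v_i$. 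You instead isolate the vector identity $T_wy=V^{-1}c(w)$, with $c(w):=(p(v_1w),\ldots,p(v_dw))'$, which is the column-side dual of Sublemma~\ref{sl.generators.1}; its induction mentions only the fixed rows $v_1,\ldots,v_d$, and the work done by the paper's ``induction hypothesis at $v_ia$'' is done in your argument by the persistence of the column-dependency vector $\beta=V^{-1}c(w)$: since the columns $w_1,\ldots,w_d$ of the truncation contain the invertible $V$ and the truncation has rank $d$, $\beta$ is the \emph{global} coefficient vector for column $w$, so it may be evaluated at the rows $v_ia$ to give $W_a\beta=c(aw)$. The general $v$ is then handled once, at the very end, by the row-span argument that the paper must redo inside every inductive step. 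The two mechanisms are transposed uses of the same rank condition \eqref{eq.generators}; yours buys a cleaner induction (no re-quantification over $v$ within the step) and a pleasing symmetry with Sublemma~\ref{sl.generators.1}, while the paper's runs on row dependencies alone and never needs the column-space observation.
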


{\em Proof of Lemma~\ref{sl.generators.2}.}  We do this by induction
on $|w|$, starting with $|w|=0$, that is $w=\epsilon$ and
$T_{\epsilon}=V^{-1}W_{\epsilon}=Id$.  Due to $\rk\cP_{p,d-1,d-1} =
\rk \cP_{p,\lceil\frac{n}{2}\rceil,\lfloor\frac{n}{2}\rfloor }$, by
(\ref{eq.generators}), the row $(\cP_p)_v$ in
$\cP_{p,\lceil\frac{n}{2}\rceil,\lfloor\frac{n}{2}\rfloor}$ is
contained in the span of the rows $(\cP_p)_{v_i}$, by choice of the
$v_i$ \eqref{eq.V}.  Therefore, it suffices to show the statement for
$v=v_i$. Writing $V_i$ for the $i$-th row of $V$ and $e_i$ for the
$i$-th canonical basis vector, we get
\begin{multline}
  (p(v_iw_1),...,p(v_iw_d))T_{\epsilon}y=V_iV^{-1}\begin{pmatrix}p(v_1)\\ \vdots\\ p(v_d)\end{pmatrix}
  =e_i'\begin{pmatrix}p(v_1)\\ \vdots\\ p(v_d)\end{pmatrix}=p(v_i).
\end{multline}
For the step $|w|\to |w|+1$, let $\tilde{w}=aw$ with $a\in\S$.  By
arguments which are analogous to those for $|w|=0$, it suffices to
consider $v=v_i$ referring to one of the row space generators
$(\cP_p)_{v_i}$ (while the induction hypothesis already holds for {\em
  all} $v,|v|\le\lceil\frac{n}{2}\rceil$)
\begin{equation}
  \begin{split}
    (&p(v_iw_1),...,p(v_iw_d))T_{\tilde{w}}y = V_iT_aT_wy = V_iV^{-1}W_aT_wy\\
    &= e_i'W_aT_wy = (p(v_iaw_1),...,p(v_iaw_d))T_wy \stackrel{(*)}{=} p(v_iaw) = p(v_i\tilde{w})
  \end{split}
\end{equation}
where $(*)$ is the induction hypothesis with $v=v_ia$, which applies
because of
$|v_ia|\le d\le\lceil\frac{n}{2}\rceil$.\qed\\

{\em Proof of Lemma~\ref{l.generators} cont.} Let $u\in\S^*$ such
that $|u|\le n$. Split $u=vw$ into two strings $v,w$ such that
$|v|\le\lceil\frac{n}{2}\rceil, |w|\le\lfloor\frac{n}{2}\rfloor$. We
compute
\begin{equation}
  \begin{split}
    x'T_uy &= x'T_vT_wy \stackrel{L.\ref{sl.generators.1}}{=} 
    = (p(vw_1),...,p(vw_d))T_wy \stackrel{L.~\ref{sl.generators.2}}{=} p(vw) = p(u).
  \end{split}
\end{equation} 
This yields (\ref{eq.oomrepr1}). For (\ref{eq.oomrepr2}) 
we compute
\begin{equation}
  \begin{split}
    (&p(v_iw_1),...,p(v_iw_d))\sum_{a\in\S}T_ay = \sum_{a\in\S}(p(v_iw_1),...,p(v_iw_d))T_ay \\
    &\stackrel{(L.\ref{sl.generators.2})}{=} \sum_{a\in\S}p(v_ia) = p(v_i) \stackrel{(L.\ref{sl.generators.2})}{=} (p(v_iw_1),...,p(v_iw_d))y
  \end{split}
\end{equation}
which yields the claim since $\spann\{(p(v_iw_1),...,p(v_iw_d))\mid
i=1,...,d\} = \C^d$.\qed\\

The step from the set-theoretic Lemma~\ref{l.generators} to the
our ideal-theoretic Theorem~\ref{t.generators} now follows from
standard arguments, as e.g.~listed in \cite{Cox}. In the following,
$\overline{A}$ denotes the Zariski closure of a set $A$, which is the
smallest affine algebraic variety which contains the set $A$, see
\cite{Cox}, sec.~4.4, def.~2.\\

In the following, we use
\begin{equation*}
F_d:=\im\mathbf{f}_{\cM_d,n}
\end{equation*}
as a simpler notation for the image of $\mathbf{f}_{\cM_d,n}$.\\

{\em Proof of Theorem~\ref{t.generators}}: We first compute
\begin{equation}
\begin{split}
V_{\cM_d,n} = \overline{F_d} 
&= \overline{(F_d\setminus F_{d-1}) \;\cup\; (F_{d-1}\setminus F_{d-2}) \;\cup\; ... \;\cup\; (F_1\setminus F_0) \;\cup\; F_0}\\
&= \overline{F_d\setminus F_{d-1}} \;\cup\; \overline{F_{d-1}\setminus F_{d-2}} \;\cup\; ... \;\cup\; \overline{F_1\setminus F_0} \;\cup\; \overline{F_0}
\end{split}
\end{equation}
where the last equation is an obvious consequence of the definition of
the Zariski closure: the Zariski closure agrees with the topological
closure if the latter one already is a variety.  The irreducibility of
$V_{\cM_d,n}$ implies that $V_{\cM_d,n}$ agrees with one of the
components $\overline{F_0},\overline{F_1\setminus
  F_0},...,\overline{F_d\setminus F_{d-1}}$. By
Theorem~\ref{t.mmmdim}, $\dim V_{\cM_d,n}=(|\S|-1)d^2+d$. Because of
$\dim\overline{F_e\setminus F_{e-1}} \le \overline{F_e} \le
\dim e^2(|\S|-1) + e$, which also follows from Theorem~\ref{t.mmmdim}, we
conclude that
\begin{equation}
V_{\cM_d,n} = \overline{F_{d}\setminus F_{d-1}}.
\end{equation}
By (\ref{eq.pd1d1}), it follows that (\ref{eq.generators})
is equivalent to 
\begin{equation}
\rk \cP_{p,\lfloor\frac{n}{2}\rfloor, \lceil\frac{n}{2}\rceil },\;
  \rk\cP_{p,\lceil\frac{n}{2}\rceil, \lfloor\frac{n}{2}\rfloor } \le d\quad
  \text{and}\quad
  \rk\cP_{p,d-1,d-1} \ge d.
\end{equation}
Application of Lemma~\ref{l.generators} reveals that
\begin{equation}
  \label{eq.Fdequiv}
  F_d\setminus F_{d-1} = A_{d+1,n}\setminus B_d
\end{equation}
where
\begin{eqnarray*}
  A_{d+1,n} & := & \{(p(v))_{v\in\S^n}\mid  \det (p(u_iv_j))_{1\le i,j\le d+1} = 0,\\ && \forall\; 0\le
  |u_i|,|v_j|\le\lceil\frac{n}{2}\rceil,|u_iv_j|\le n\}\\
  B_d & := & \{(p(v))_{v\in\S^n}\mid\det (p(u_iv_j))_{1\le i,j\le d} = 0,\\ &&
  \forall\; 0\le|u_i|,|v_j| \le d-1\}
\end{eqnarray*}
since $A_{d+1,n}$ consists of all $p$ such that all $(d+1)$-minors in
$\cP_{p,\lfloor\frac{n}{2}\rfloor, \lceil\frac{n}{2}\rceil }$ and
$\cP_{p,\lceil\frac{n}{2}\rceil, \lfloor\frac{n}{2}\rfloor }$ are zero
whereas $B_d$ encompasses all $p$ such that not all $d$-minors in
$\cP_{p,d-1,d-1}$ are zero.

As zero sets of determinantal (hence polynomial) equations, both
$A_{d+1,n}$ and $B_d$ are varieties, and recalling the definition
\eqref{eq.idealdef1},\eqref{eq.idealdef2} of $I_{d+1,n}$ and $J_d$, we
can conclude that these are just the ideals associated with
$A_{d+1,n}$ and $B_d$. By Hilbert's Nullstellensatz (see \cite[p.~174,
  theorem~6]{Cox}):
\begin{equation}
  I(A_{d+1,n}) = \rad I_{d+1,n}\quad\text{and}\quad I(B_d) = \rad J_d.
\end{equation}
The claim of Theorem~\ref{t.generators} now follows from the
interrelationship between quotients of ideals and differences of
varieties, as explicitly expressed by plugging $\rad I_{d+1,n}$ and
$J_d$ into $I$ and $J$ of the second statement of \cite[p.~192,
th.~7]{Cox} (the algebraically closed $k$ there becomes $\C$ here).\qed\\

\section{Algorithm}
\label{sec.algorithm}

Let $\S:=\{a,b\}$ be a binary-valued alphabet. The following algorithm
determines whether a probability distribution $\Prob:\S^n\to[0,1]$ is
due to a HMP on at most $d^*\le\frac{n+1}{2}$ hidden states, as supported
by

\begin{Thm}
  \label{t.implementation}
  \begin{sloppypar}Algorithm~\ref{a.finiteprob} below correctly
    decides and infers a HMP parametrization with at most $d$ hidden
    states for all but a lower-dimensional subvariety in
    $\cH_{d,+}$. \end{sloppypar}
\end{Thm}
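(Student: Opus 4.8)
The plan is to verify that the three steps of the workflow preceding this theorem implement exactly the set-theoretic characterizations established in the earlier sections, so that correctness reduces to tracking, for each of the three possible outputs, which geometric stratum the input $\Prob$ occupies. The decisive simplification is Corollary~\ref{c.binaryhmm}: for $|\S|=2$ the hidden Markov and finitary varieties coincide, so both deciding $\Prob\in\im\mathbf{f}_{\cH_d,n}$ and producing an abstract parametrization can be carried out entirely inside the finitary model, where Lemma~\ref{l.generators} supplies both a finite rank test and explicit recovery formulas.

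First I would treat the decision part. By Lemma~\ref{l.generators}, $\Prob$ lies in $\im\mathbf{f}_{\cM_d,n}$ with rank exactly $d$ if and only if the rank equalities \eqref{eq.generators} hold; these involve only finitely many entries of $\cP_p$ indexed by strings of length $\le n$ (shorter strings being expressed through \eqref{eq.process}) and are decided by Gaussian elimination. Membership in the full image $\im\mathbf{f}_{\cM_d,n}$ amounts to $\rk\cP_{p,d-1,d-1}\le d$ together with the two $(d+1)$-minor conditions, i.e.\ to the vanishing of the generators of $\rad I_{d+1,n}:J_{d,n}$ from Theorem~\ref{t.generators}; hence Step~1 is both correct and effective.

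Next comes inference. When \eqref{eq.generators} holds, formulas \eqref{eq.V}--\eqref{eq.Ta} yield a finitary parametrization with matrices $T_a,T_b$ satisfying $(T_a+T_b)\mathbf{1}=\mathbf{1}$. To convert this into a stochastic triple $(M,E,\pi)\in\cH_{d,+}$ I would exploit the defining decomposition $T_a=O_aM$ with $M=T_a+T_b$: set $M:=T_a+T_b$ and, when $M$ is invertible, $O_a:=T_aM^{-1}$. By Lemma~\ref{l.imagedim} any two finitary parametrizations of the same point differ by an invertible $S$ with $S\mathbf{1}=\mathbf{1}$, so the recovered $T_aM^{-1}=S^{-1}O_aS$ is conjugate to the true diagonal emission matrix; whenever $O_a$ has pairwise distinct eigenvalues---precisely the configuration excluded by $\cH_{d,0}\subset\cN_d$ in \eqref{eq.nullset}---diagonalizing $T_aM^{-1}$ recovers $S$ up to scaling and a permutation of hidden states, and the normalization $S\mathbf{1}=\mathbf{1}$ fixes the scaling. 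Transporting $T_a$, $M$, and the recovered $x$ by $S$ produces genuine stochastic parameters, while Corollary~\ref{c.genericfiber} guarantees that the fiber has cardinality $d!$, so the output is unique up to permutation, as asserted in Theorem~\ref{t.identification}.

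It then remains to match the three labels to the strata: on $\mathbf{f}_{n,d}(\cH_{d,+}\setminus\cN_d)$ every genericity test (rank $=d$, $M$ invertible, distinct eigenvalues) succeeds and the recovered parameters are real and non-negative, giving ``HMP''; on $\mathbf{f}_{n,d}(\cN_d)$ one of these tests fails, giving ``Cannot decide''; otherwise either the rank test fails or the recovered triple has a non-real or negative entry, giving ``No HMP''. The main obstacle is the finitary-to-HMM conversion in Step~3: one must verify that the eigenvector computation reproduces exactly the $d!$ permutation-equivalent stochastic preimages and introduces no spurious ones, which is where the distinct-eigenvalue hypothesis and the dimension drop $\dim\cN_d<\dim\cH_d$ from Lemma~\ref{l.genericfiber} become essential. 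Confirming that the excluded configurations form precisely a lower-dimensional subvariety---so that the solution is genuinely generic for problem~\ref{pro.finite}---is the delicate point and follows from Corollary~\ref{c.genericfiber}.
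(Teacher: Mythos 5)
Your proposal follows essentially the same route as the paper: the decision step via the rank equalities of Lemma~\ref{l.generators}, inference via the recovery formulas \eqref{eq.V}--\eqref{eq.Ta}, conversion to a stochastic parametrization by setting $M=T_a+T_b$, $O_a=T_aM^{-1}$ and diagonalizing (using Lemma~\ref{l.imagedim} to identify the conjugating $S$ with $S\mathbf{1}=\mathbf{1}$), and genericity plus uniqueness up to the $d!$ permutations from Lemma~\ref{l.genericfiber} and Corollary~\ref{c.genericfiber}. This is exactly the content of the paper's Lemma~\ref{l.implementation} (error analysis of the algorithm, stepping right-to-left through $\mathbf{f}_{\cH_e,n}(\cH_{e,+})\subset\im\mathbf{f}_{\cH_e,n}\subset\im\mathbf{f}_{\cM_e,n}$) followed by the dimension count for $\cN_d\cap\cH_{d,+}$.

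One claim in your decision paragraph is, however, wrong as stated: membership in the full image $\im\mathbf{f}_{\cM_d,n}$ is \emph{not} equivalent to the vanishing of the generators of $\rad I_{d+1,n}:J_{d,n}$. The image of a polynomial map is only a constructible set; Theorem~\ref{t.generators} characterizes its Zariski closure $V_{\cM_d,n}$, which in general strictly contains the image (and likewise ``$\rk\cP_{p,d-1,d-1}\le d$ plus the $(d+1)$-minor conditions'' is too weak: a point with $\rk\cP_{p,\lfloor\frac{n}{2}\rfloor,\lceil\frac{n}{2}\rceil}=d$ but $\rk\cP_{p,d-1,d-1}<d$ satisfies your conditions yet fails the characterization of Lemma~\ref{l.generators} at every level $e\le d$, so it need not lie in the image). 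The correct and effective membership test---and the one the paper's algorithm actually runs---is the three-fold rank equality of Lemma~\ref{l.generators} iterated over $e=1,\dots,d$ in the while loop. Since your main argument only invokes that rank test on the generic, rank-exactly-$d$ stratum, this misstatement is not load-bearing, but it should be excised rather than used to justify ``Step~1 is correct''. A second, cosmetic mismatch: Algorithm~\ref{a.finiteprob} has no ``Cannot decide'' output (that label belongs to the wrapper routine of Theorem~\ref{t.identification}); on $\mathbf{f}_{\cH_e,n}(\cN_e)$ the algorithm may simply (and possibly mistakenly) print ``No HMP'', which is precisely why the paper phrases Lemma~\ref{l.implementation} as a one-sided error bound rather than as an exact stratum detection.
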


That is, Theorem~\ref{t.implementation} establishes a {\em generic}
solution for Problem~\ref{pro.finite}. 

\begin{Alg}
\label{a.finiteprob}
\end{Alg}

{\sc IdentifyHMP($\Prob=(p(v))_{v\in\S^n}$)}

\begin{algorithmic}[1]
\STATE $e\leftarrow 1$
\WHILE{$e\le d:=\lfloor\frac{n+1}{2}\rfloor$}
\IF{$\rk\cP_{p,e-1,e-1} = \rk \cP_{p,\lfloor\frac{n}{2}\rfloor, \lceil\frac{n}{2}\rceil } = 
\rk\cP_{p,\lceil\frac{n}{2}\rceil, \lfloor\frac{n}{2}\rfloor } = e$}
\STATE $T_a,T_b,x\leftarrow$ {\sc InferFinitaryParam($\Prob,e$)}
\IF{$\det[T_a+T_b] > 0 \;\; \text{and} \;\; T_a[T_a+T_b]^{-1}$ is
  diagonalizable\\ such that all eigenvalues are different} 
\STATE $M,O_a,O_b,\pi\leftarrow$ {\sc InferHMMParam($T_a,T_b,x$)}
\IF{$(M,O_a,O_b,\pi)$ is stochastic} 
\STATE {\bf print} 'HMP on $e$ hidden states'
\STATE {\bf return} $M,O_a,O_b,\pi$ as parametrization
\ELSE
\STATE {\bf print} 'No HMP on $d$ hidden states'
\STATE {\bf return}
\ENDIF
\ENDIF
\ENDIF
\STATE $e\leftarrow e+1$
\ENDWHILE
\STATE {\bf print 'No HMP on $d$ hidden states'}
\end{algorithmic}

\medskip
\begin{sloppypar} {\sc InferFinitaryParam($\Prob,e$)} is a routine
  that computes an $e$-dimensional parametrization
  $(T_a,T_b,x)\in\cM_e$ for a finitary process. It works by computing
  $T_a,T_b$ and $x$ according to
  (\ref{eq.V},\ref{eq.x},\ref{eq.y},\ref{eq.Ta}) and subsequent
  application of Theorem \ref{t.oom} (note that any invertible $S$
  with $S^{-1}y=\mathbf{1}$ applies). According to Lemma
  \ref{l.generators} this works if
  \begin{equation*}
    \rk\cP_{p,e-1,e-1} = \rk \cP_{p,\lfloor\frac{n}{2}\rfloor, \lceil\frac{n}{2}\rceil } = 
    \rk\cP_{p,\lceil\frac{n}{2}\rceil, \lfloor\frac{n}{2}\rfloor } = e
  \end{equation*}
  which is guaranteed by step $3$.
\end{sloppypar}
\medskip

{\sc InferHMMParam($T_a,T_b,x$)} works if $[T_a+T_b]$ is invertible
and $T_a[T_a+T_b]^{-1}$ is diagonalizable such that all eigenvalues
$\lambda_1,...,\lambda_e$ are different, by Lemma
\ref{l.genericfiber}. In this case, one chooses (note that this is
possible!) $S\in\C^{e\times e}$ such that $S\mathbf{1}=\mathbf{1}$ and
\begin{equation*}
S^{-1}T_a[T_a+T_b]^{-1}S = \diag(\lambda_1,...,\lambda_e).
\end{equation*}
One then computes 
\begin{eqnarray*}
M &=& S^{-1}[T_a+T_b]S,\\
O_a,O_b&= &T_aM^{-1}, T_bM^{-1}\\ 
\pi & = & S'x.
\end{eqnarray*} 

\medskip The proof of Theorem~\ref{t.implementation} is based on
the following lemma for which we recall the definition of $\cN_d$,
see (\ref{eq.nullset}).

\begin{Lem}
\label{l.implementation}
Algorithm~\ref{a.finiteprob} can decide incorrectly only if
\begin{equation*}
\Prob\in\mathbf{f}_{\cH_e,n}(\cN_e)
\end{equation*}
for some $e=1,...,d$ where it may mistakenly output 'No
HMP on at most $d$ hidden states'.
\end{Lem}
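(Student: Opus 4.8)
The plan is to trace through Algorithm~\ref{a.finiteprob} step by step and verify that every \emph{incorrect} output must factor through the exceptional set $\cN_e$. First I would note that the algorithm's only two possible verdicts are an affirmative ``HMP on $e$ hidden states'' together with a parametrization, and the negative ``No HMP on at most $d$ hidden states''. The task therefore splits into two halves: showing that an affirmative answer is \emph{never} wrong (no false positives, even on $\cN_e$), and showing that a false negative can occur \emph{only} when $\Prob\in\mathbf{f}_{\cH_e,n}(\cN_e)$ for some $e$.

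\medskip\noindent\textbf{No false positives.} I would argue that whenever the algorithm prints ``HMP on $e$ hidden states'' and returns $(M,O_a,O_b,\pi)$, this output is genuinely correct. The affirmative branch is reached only after the rank test in step~3 succeeds, so by lemma~\ref{l.generators} the routine {\sc InferFinitaryParam} produces a valid $e$-dimensional finitary parametrization $(T_a,T_b,x)\in\cM_e$ with $p(v)=x'T_{v}\mathbf{1}$. The subsequent call to {\sc InferHMMParam} applies the similarity transform $S$ (with $S\mathbf{1}=\mathbf{1}$), which by lemma~\ref{l.imagedim} leaves $\mathbf{f}_{\cM_e,n}$ invariant, so the constructed $(M,O_a,O_b,\pi)$ still reproduces $\Prob$ via $\mathbf{f}_{\cH_e,n}$. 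The final stochasticity check in step~6 guarantees that the returned tuple actually lies in $\cH_{e,+}$, i.e.\ is a \emph{bona fide} HMP parametrization. Hence an affirmative output is always correct, and I would conclude that \emph{any} misdecision is necessarily a false negative.

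\medskip\noindent\textbf{Locating false negatives.} The negative verdict is issued in two places: inside the loop (step~10, when the stochasticity test fails) and after the loop terminates (step~18, when no $e$ triggered the affirmative branch). I would analyze when such a negative is \emph{incorrect}, i.e.\ when $\Prob$ is in fact $\mathbf{f}_{\cH_e,n}(\Theta)$ for some genuine stochastic $\Theta\in\cH_{e,+}$ with $e\le d$. The key is to show that if $\Theta\in\cH_{e,+}\setminus\cN_e$, then the algorithm is forced down the affirmative branch and returns correctly. Indeed, $\Theta\notin\mathbf{f}_{\cH_e,n}^{-1}(\cM_{e-1,n})$ means the rank is exactly $e$, so by lemma~\ref{l.generators} the three ranks in step~3 all equal $e$ and the test passes. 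Moreover $\Theta\notin\cH_{e,0}$ guarantees that $M=[T_a+T_b]$ is invertible and that some $O$-matrix has pairwise distinct eigenvalues, which (via corollary~\ref{c.genericfiber} and the generic fiber count $e!$) ensures the diagonalizability test in step~5 succeeds and {\sc InferHMMParam} recovers a stochastic parametrization passing step~6. I would therefore invoke the contrapositive: if $\Prob=\mathbf{f}_{\cH_e,n}(\Theta)$ is misclassified as ``No HMP'', then $\Theta$ cannot lie in $\cH_{e,+}\setminus\cN_e$, forcing $\Prob\in\mathbf{f}_{\cH_e,n}(\cN_e)$.

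\medskip The main obstacle I anticipate is the careful bookkeeping across all $e=1,\dots,d$, since the loop exits permanently at the first $e$ that triggers \emph{any} return, whether affirmative or negative. I must check that a spurious negative return at some small $e$ (step~10) does not mask a correct larger-$e$ representation unless the obstruction already sits in some $\mathbf{f}_{\cH_e,n}(\cN_e)$; concretely, a return at step~10 happens when the rank and diagonalizability tests pass but stochasticity fails, and I would argue this places $\Prob$ in the image of a non-stochastic complex parametrization, so that any \emph{genuine} stochastic preimage of the same rank must, by the $e!$-fold generic fiber of lemma~\ref{l.genericfiber}, either coincide (contradiction) or force $\Theta\in\cN_e$. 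Reconciling the per-iteration exit behavior with the union over $e$ in the statement is the delicate part; the individual rank, diagonalizability, and stochasticity implications are each routine given the earlier lemmata.
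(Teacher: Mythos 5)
Your proposal is correct and takes essentially the same route as the paper's own proof: both trace the algorithm iteration by iteration, identify the step-3 rank test with membership in $\im\mathbf{f}_{\cM_e,n}\setminus\im\mathbf{f}_{\cM_{e-1},n}$ via lemma~\ref{l.generators}, and use the definition of $\cN_e$ together with the $e!$-element generic fiber of lemma~\ref{l.genericfiber} to show that steps 5--7 succeed precisely when $\Prob\in\mathbf{f}_{\cH_e,n}(\cH_{e,+}\setminus\cN_e)$, so any misdecision is a false negative landing in $\mathbf{f}_{\cH_e,n}(\cN_e)$. Your explicit split into ``no false positives'' and ``locating false negatives,'' including the fiber-count contradiction for a spurious step-10 exit, is exactly the content the paper compresses into its assertions that step 7 determines whether $\Prob\in\mathbf{f}_{\cH_e,n}(\cH_{e,+}\setminus\cN_e)$ and that steps 8 and 11 decide correctly.
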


Using Lemma~\ref{l.implementation} a proof of Theorem~\ref{t.implementation}
is easy:

{\em Proof of Theorem~\ref{t.implementation}.}
By Lemma~\ref{l.genericfiber}, $\cN_d$ forms a lower-dimensional
variety in $\cH_d$ and further, by Corollary~\ref{c.genericfiber},
$\cN_d\cap\cH_{d,+}$ also forms a lower-dimensional semialgebraic
set in $\cH_{d,+}$.\qed\\

{\em Proof of Lemma~\ref{l.implementation}.}
Let $\Theta\in\cH_{d,+}$ and
\begin{equation*}
  \Prob=\mathbf{f}_{\cH_d,n}(\Theta)
\end{equation*}
such that $\Prob$ is incorrectly classified as 'No HMP' by
Algorithm~\ref{a.finiteprob}.  We have to show that
\begin{equation*}
  \Theta\in\cN_e\quad\text{for some } e=1,...,d.
\end{equation*}
We recall the fundamental relationship (see
Propositions~\ref{p.finitaryobservations},\ref{p.hmmobservations})
\begin{equation}
  \label{eq.fundamental}
  \mathbf{f}_{\cH_e,n}(\cH_{e,+})\subset\im\mathbf{f}_{\cH_e,n}
  \subset\im\mathbf{f}_{\cM_e,n}
\end{equation}
In relation to (\ref{eq.fundamental}), Algorithm~\ref{a.finiteprob}
tests for membership from right to left in the $e$-th iteration of
the while loop, thereby stepwise approving or rejecting that
$\Prob\in\mathbf{f}_{\cH_e,n}(\cH_{e,+})\,[\subset\mathbf{f}_{\cH_d,n}(\cH_{d,+})]$.
First, by Lemma~\ref{l.generators}, {\em step} $3$ tests for
\begin{equation}
  \label{eq.dnotd-1}
  \Prob\in(\im\mathbf{f}_{\cM_e,n}\setminus\im\mathbf{f}_{\cM_{e-1},n}).
\end{equation}
Note that the case $\Prob\in\im\mathbf{f}_{\cM_{e-1},n}$ was excluded
in the iteration before.  This allows to infer an $e$-dimensional
parametrization for the respective finitary process in {\em step} $6$
(see the description of {\sc InferFinitaryParam} above). The {\bf if}
condition in {\em step} $7$ finally is the critical point; it
determines whether
\begin{equation}
  \label{eq.probinhmm}
  \Prob\in\mathbf{f}_{\cH_e,n}(\cH_{e,+}\setminus\cN_e)
\end{equation}
see the description of {\sc InferHMMParam}. If not, the algorithm
issues the output 'No HMP' which can be mistakenly due to either
$\Prob\in\mathbf{f}_{\cH_e,n}(\cH_{e,+}\cap\cN_e)\subset\mathbf{f}_{\cH_e,n}(\cN_e)$
or correctly due to either
$\Prob\in\mathbf{f}_{\cH_e,n}(\cN_e\setminus\cH_{e,+})$ or
$\Prob\in\im\mathbf{f}_{\cM_e,n}\setminus\im\mathbf{f}_{\cH_e,n}$.\par
By Lemma~\ref{l.genericfiber}, the parameters inferred in step $7$
are unique, up to permutations of rows and columns.  Therefore,
steps $8$ and $11$ decide correctly.\qed\\

\section{Acknowledgements}

First and foremost, the author would like to thank Bernd Sturmfels who
made the author aware of the algebraic challenges in this line of
research and has been a great source of inspiration throughout the
author's postdoctoral stay at UC Berkeley.  Special thanks to Jon
Hauenstein for Bertini computations and to Anton Leykin for advice on
both Macaulay and Bertini computations. Preliminary versions of that
work were presented on a workshop on Algebraic Statistics at the MSRI,
Berkeley, December 2008 and the annual AMS meeting in Lexington,
Kentucky, March 2010. The author would like to thank the organizers
and participants of those meetings for support and stimulating
discussions. The author also would like to thank David DesJardins for
a private donation that made the postdoctoral stay at UC Berkeley
possible.

\end{document}